\newtheorem{theorem}{Theorem}
\newtheorem{corollary}{Corollary}
\newtheorem{lemma}{Lemma}
\newtheorem{proposition}{Proposition}
\newtheorem{definition}{Definition}
\newtheorem{example}{Example}
\newcommand{\ex}[1]{  \ifthenelse{\boolean{showcomments}}{ {\mathbb{E}}#1} {}  }
\newcommand{\var}[1]{  \ifthenelse{\boolean{showcomments}}{ {\mathrm{var}}#1} {}  }
\newcommand{\id}[1]{  \ifthenelse{\boolean{showcomments}}{ {\bf 1}_{#1} } {}  }
\newcommand{\eqdef}{:=}
\newcommand{\one}{\mathbf{1}}
\newcommand{\prob}{\mathbb{P}}
\newcommand{\lingwen}[1]{  \ifthenelse{\boolean{showcomments}}
{ \textcolor{red}{(Lingwen says:  #1)}} {}  }
\newcommand{\adam}[1]{\ifthenelse{\boolean{showcomments}}
{ \textcolor{red}{(Adam says:  #1)}}{}}
\newcommand{\steven}[1]{\ifthenelse{\boolean{showcomments}}
{ \textcolor{red}{(Steven says: #1)}}{}}
\newcommand{\niangjun}[1]{\ifthenelse{\boolean{showcomments}}
{ \textcolor{red}{(Niangjun says:  #1)}}{}}
\newcommand{\addcite}[0]{\ifthenelse{\boolean{showcomments}}
{ \textcolor{red}{(addcite)}}{}}
\newcommand{\addcites}[0]{\ifthenelse{\boolean{showcomments}}
{ \textcolor{red}{(addcite(s))}}{}}
\newcommand{\addref}[0]{\ifthenelse{\boolean{showcomments}}
{ \textcolor{red}{(addref)}}{}}
\newcommand{\todo}[1]{\ifthenelse{\boolean{showcomments}}
{ \textcolor{red}{(To do: #1)}} {} }
\newcommand{\delete}[1]{\ifthenelse{\boolean{showcomments}}
{\comment{#1}}{}}
\newcommand{\add}[1]{\ifthenelse{\boolean{showcomments}}
{{\comment{#1}}}{}}
 \newcommand{\nedit}{\color{black}}
\title{Distributional Analysis for \\ Model Predictive Deferrable Load Control}
\author{Niangjun Chen, Lingwen Gan, Steven H. Low, Adam Wierman}
\begin{document}
\maketitle

\begin{abstract}
Deferrable load control is essential for handling the uncertainties associated with the increasing penetration of renewable generation. Model predictive control has emerged as an effective approach for deferrable load control, and has received considerable attention.  Though the average-case performance of model predictive deferrable load control has been analyzed in prior works,  the distribution of the performance has been elusive.  In this paper, we prove strong concentration results on the load variation obtained by model predictive deferrable load control.  These results highlight that the typical performance of model predictive deferrable load control is tightly concentrated around the average-case performance.
\end{abstract}

\section{Introduction}

The electricity grid is at the brink of change. On the generation side, the penetration of wind and solar in the energy portfolio is on the rise due to environmental concerns. And, on the demand side, many smart appliances and devices with adjustable power consumption levels are entering the market. The combination of these two changes make generation  less controllable and load less predictable, which makes the traditional ``generation follows load'' model of control much more difficult.

Fortunately, while smart devices make demand forecasting more challenging, they also provide an opportunity to mitigate the intermittency of wind and solar generation from the load side by allowing for demand response. There are two major categories of demand response, direct load control (DLC) and price-based demand response. See \cite{dr_mechanisms} for a discussion of the contrasts between these approaches.

In this paper we focus on \textit{direct load control} with the goal of using demand response to \textit{reduce variations of the aggregate load}.  This objective has been studied frequently in the literature, e.g., \cite{system_loss, gan2013}, because reducing the variations of the aggregate load corresponds to minimizing the generation cost of the utilities.  In particular, large generators with the smallest marginal costs, e.g., nuclear generators and hydro generators, have limited ramp rates, i.e., their power output cannot be adjusted too quickly. So, if load varies frequently, then it must be balanced by more expensive generators (i.e., ``peakers'') that have fast ramp rate.  Thus, if the load variation is reduced, then the utility can use the least expensive sources of power generation to satisfy the electricity demand. 

\subsection{Model predictive deferrable load control}


There is a growing body of work on direct load control algorithms, which includes both simulation-based evaluations \cite{Acha10,Mets10,ilic2002potential} and theoretical performance guarantees \cite{Ma10,Gan12}. The most commonly proposed framework for algorithm design from this literature is, perhaps, model predictive control.

Model predictive control (MPC) is a classical control algorithm, e.g., see \cite{qin2003survey} for a survey. {\nedit MPC can be applied to settings where unknown disturbances to the system are present through the robust control paradigm or the certainty equivalence principle, e.g., see \cite{camacho2013model, kwon2006receding, bemporad1999robust}.} In the context of direct load control, many variations have been proposed.  {\nedit{Scalability and performance in the presence of uncertainty are essential to MPC algorithms for direct load control.} }At this point, there exist model predictive deferrable load control algorithms that can be fully distributed with guaranteed convergence to optimal deferrable load schedules, e.g., \cite{gan2013}.

However, to this point, the evaluation of model predictive deferrable load control has focused primarily on average-case analysis, e.g., \cite{conejo2010real,roos1998industrial}, or worst-case analysis, e.g., \cite{chen2012iems,li2011line}.  While such analysis provides important insights, there is still much to learn about the performance of model predictive deferrable load control.

For example, it is likely that an algorithm has good average performance but bad worst case performance, and vice versa. 
What is really needed is a distributional analysis that tells us about the ``typical'' performance, which can say, e.g., that the load variation will be less than the desired level 95 percent of the time.  But, to this point, no results on the distribution of the load variation under model predictive deferrable load control exist.

\subsection{Contributions of this paper}

\textit{The main contribution of this paper is to provide a distributional analysis of the load variation under model predictive deferrable load control.}  More specifically, we prove sharp concentration results for the load variation arising from model predictive distributed load control.  

Our results are derived in the context of a standard formulation of the so-called ``optimal deferrable load control'' (OLDC) problem, where 
we adopt the model predictive deferrable load control mechanism in \cite{gan2013} since it can be fully distributed, and  average-case analysis suggests that it performs well in environments with uncertain predictions. 

However, in Proposition \ref{prop: worst-case}, we provide a new worst case analysis which states that this model predictive deferrable load control can be as bad as having no control at all if predictions are adversarial. 

Given this context, the main result of the paper is Theorem \ref{thm: tail-bound}, which proves a Bernstein-type concentration for the load variation under model predictive deferrable load control.  This result highlights that the load variation is concentrated around its mean, and therefore the typical performance is tightly concentrated around the average performance.  Additionally, the result provides useful performance bounds on, e.g., the 95th percentile. 


Finally, in addition to the usefulness of Theorem \ref{thm: tail-bound} in the context of deferrable load control, the proof technique we develop may also be useful for understanding the distributional performance of model predictive control in other settings.  

\section{Model}
\label{s.model}

In this paper we consider a standard model for deferrable load control introduced by \cite{ng1998direct} and then studied in, e.g., \cite{ilic2002potential, gan2011optimal, Ma10}.  It is a discrete-time model where the time-slot length matches the timescale at which the power grid system operator makes control decisions.

The goal is to flatten the aggregate load over the control horizon $t \in \{1,...,T\}$. In practice, the control horizon could be a day and a time slot could be on the order of minutes. To formalize the objective of flattening the aggregate load, previous work has tended to focus on minimizing {\nedit the variation of the load:}
\begin{equation} V := \frac{1}{T}\sum_{t=1}^T\left(d(t) - \frac{1}{T}\sum_{\tau=1}^Td(\tau)\right)^2,
\label{eqn: def-load-variance}
\end{equation}
where $d = (d(1), d(2), \ldots, d(T))$ is the aggregate load profile at each time slot.

Importantly, the aggregate load consists of two types. The first type, which is called \textit{baseload}, includes loads like lighting and heating, and is stochastic and non-controllable. Note that renewable generation like wind and solar can be considered as a negative stochastic and non-controllable load.  Denote the baseload by $b=(b(1), b(2), \ldots, b(T))$, and note that $b$ can be interpreted as the difference between non-deferrable load and renewable generation during each time period.

The second type of load, which is called \textit{deferrable load}, consists of devices whose power consumption can be controlled by the utility, e.g., pool pumps, dryers, and electric vehicles taking part in direct load control programs \cite{ev_model, smart_home}.  It is the control of these devices that can be used to minimize \eqref{eqn: def-load-variance}, provided that energy constraints and charging rate constraints are satisfied.  To model deferrable load we consider $N$ devices indexed $1,2,\ldots,N$, and let $p_n(t)$ denote the power consumption of device $n$ at time $t$ for $n=1,2,\ldots, N$ and $t=1,2,\ldots,T$.  Further, each device has associated constraints on the power consumption as follows
\begin{subequations}
\begin{align}
\label{eqn: continuous}
&\underline{p}_n(t) \le p_n(t) \le \bar{p}_n(t), \\
\label{eqn: no-dissipation}
&\sum_{t=1}^T p_n(t) = P_n.
\end{align}
\end{subequations}
Note that, using the above, arrival and deadline constraints can be specified by setting $\underline{p}_n(t) = \bar{p}_n(t) = 0$ for $t$ before arrival and after deadline. {\nedit Here we assume that the deferrable loads are continuously adjustable in constraint \eqref{eqn: continuous} and the power loss due to heat dissipation can be ignored in constraint \eqref{eqn: no-dissipation}. Similar assumptions are made for EV loads in \cite{gan2013, ev_model}. Although real appliances may deviate from these assumptions, we keep these simplifying assumptions as a first step towards analyzing MPC load control algorithm in the presence of uncertainty.   
}

Given the previous notation, we can now formally specify the optimal deferrable load control (ODLC) problem that is the focus of this paper. Define $[k]:=\{1,2,\ldots,k\}$ for $k\in\mathbb{Z}^+$.
	\begin{align} \label{odlc}
	{\text{\bf ODLC: } } \min~~ & \frac{1}{T} \sum_{t=1}^T\left(d(t)-\frac{1}{T}\sum_{\tau=1}^T d(\tau)\right)^2 \\
	\text{over}~~ & p_n(t),d(t), \quad \forall n,t\nonumber\\
	\text{s.t.}~~ & d(t)=b(t)+\sum_{n=1}^N p_n(t), \quad t\in[T];\nonumber \\
	& \underline{p}_n(t) \leq p_n(t) \leq \overline{p}_n(t),\quad n\in[N],t\in[T]; \nonumber \\
	& \sum_{t=1}^T p_n(t) = P_n, \quad n\in[N]. \nonumber
	\end{align}

An important observation is that ODLC is a convex optimization problem, but cannot be solved in real time since the optimal decision at time $t$ depends on future information about the baseload and the arrivals of deferrable load. This information is not known exactly, but commonly there do exist predictions of future baseload and deferrable load arrivals.  So, in practice such predictions are used for real time control.

Thus, the final component of the model is to specify a model for the predictions.  Crucially, prediction errors should grow as prediction is made further into the future.  Further, it is likely that errors are correlated, e.g., an underestimate for time slot $t+1$ likely leads to an underestimate for time slot $t+2$.  To capture these issues, \cite{gan2013} has suggested a model based on Weiner filters, and we adopt the same assumptions here.

Specifically, baseload $b$ is modeled as a random deviation $\delta b$ around its expectation $\bar{b}$ as illustrated in Fig. \ref{fig: base load}. The process $\delta b$ is modeled as a sequence of independent random variables $e(1),\ldots,e(T)$, each with mean 0 and variance $\sigma^2$, passing through a causal filter with impulse response $f$ ($f(\tau)=0$ for $\tau<0$), i.e.,
	\begin{equation*}\label{delta b}
	\delta b(\tau) = \sum_{m=1}^T e(m)f(\tau-m), \qquad \tau=1,\ldots,T.
	\end{equation*}
Using the current information, one can update the prediction at time $t$ by
\begin{equation}\label{prediction}
	b_t(\tau) = \bar{b}(\tau) + \sum_{m=1}^te(m)f(\tau-m), \qquad \tau=1,\ldots,T.
	\end{equation}

Further, deferrable loads are modeled as random arrivals over time. Let $N(t)$ be the number of loads that arrive before (or at) time $t$ for $t = 1,...,T$.  Define
 \[a(t) \eqdef \sum_{n=N(t-1)+1}^{N(t)} P_n,\qquad t=1,\ldots,T\]
as the energy request of deferrable loads that arrive at time $t$. {\nedit{We model the total energy request at each time due to arrival of deferrable loads}} $\{a(t)\}_{t=1}^T$  to be a sequence of independent random variables with mean $\lambda$ and variance $s^2$. Further, let $A(t)\eqdef\sum_{\tau=t+1}^T a(\tau)$ denote the total energy requested after time $t$ for $t=1,2,\ldots,T$. 
\begin{figure}[h]
	\centering
      \includegraphics[width=0.4\textwidth]{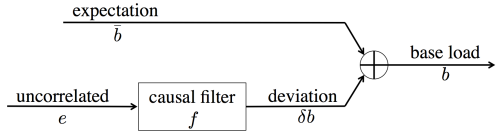}
      	\caption{Diagram of the structure of the baseload model. }
      	\label{fig: base load}
\end{figure}

In summary, when attempting to solve ODLC, an algorithm has, at time $t$, the following information: (i)  {\nedit the energy request and power consumption bounds of }the present deferrable loads, i.e., $\underline{p}_n$, $\overline{p}_n$, and $P_n$ for $n\leq N(t)$, {\nedit  with $\bar{p}_n(t) = \underline{p}_n(t)=0$ for any $t$ beyond the consumption deadline}; (ii) the expectation $\ex(A(t))$ of future energy requests; and (iii) the prediction $b_t$ of the non-deferrable load $b$.

\section{Model predictive deferrable load control}
\label{s.algorithm}

A natural approach for solving the optimal deferrable load control (ODLC) problem described in the previous section is model predictive control, which has been applied in many settings, e.g., see \cite{qin2003survey} for a survey. 


In the context of the ODLC problem, at each time $t$, such an approach uses the updated prediction of baseload $b_t$ and the updated prediction of future energy request $\ex[A(t)]$ to solve an optimization problem over the remainder of the control horizon, and obtains deferrable load profiles $(p_n(t), p_n(t+1), \ldots, p_n(T))$ for the remainder $\{t,t+1,\ldots,T\}$ of the control horizon. Only $p_n(t)$ will be implemented at time $t$, and $p_n(t+1), \ldots, p_n(T)$ will be recomputed in the future with more updated predictions.

Interestingly, previous work has found that the optimization problem that is solved should not simply be a truncated version of the ODLC problem {\nedit  as done in receding horizon control (RHC)}.  Instead, \cite{gan2013} suggests introducing a pseudo load $q$ to account for the future arrival of deferrable load, {\nedit and plan for the remainder of the entire horizon, giving rise to the shrinking horizon variant of model predictive control.}  The introduction of this term allows for strong analytic guarantees on performance \cite{gan2013}.  Hence, this is the version of model predictive control we consider in this paper.

Specifically, we consider the model predictive deferrable load control algorithm described in Algorithm \ref{algorithm: uncertainty},
    \begin{algorithm}[h]
	\caption{Model Predictive Deferrable Load Control}
	\label{algorithm: uncertainty}
    Initialize $P_n(1)\leftarrow P_n$ for $n=1,2,\ldots,N$;\\
    At time step $t=1,\ldots,T$,
    \begin{algorithmic}[1]
    \STATE Update predictions $b_t$ and $A(t)$;
    \STATE Solve {\bf ODLC-t}$\left(b_t, A(t), \left[P_n(t), \overline{p}_n, \underline{p}_n\right]_{n\in[N(t)]}\right)$ to obtain time-$t$ power consumptions $p_n(t)$ for deferrable loads $n\leq N(t)$ that have already arrived;
    \STATE Update $P_n(t+1)\leftarrow P_n(t)-p_n(t)$ for $n\leq N(t)$;
    \end{algorithmic}
	\end{algorithm}
where at each time $t$ the following optimization problem is solved
    \begin{align*}
	&\text{\bf ODLC-t  $\left(b_t, A(t), \left[P_n(t), \overline{p}_n, \underline{p}_n\right]_{n\in[N(t)]}\right)$ } \\
	\min~~ & \sum_{\tau=t}^T \left(\sum_{n=1}^{N(t)}p_n(\tau)+q(\tau)+b_t(\tau)\right)^2 \\
	\text{over}~~ & p_n(\tau),q(\tau),\quad n\leq N(t), \tau\geq t \nonumber\\
	\text{s.t.}~~
	& \underline{p}_n(\tau) \leq p_n(\tau) \leq \overline{p}_n(\tau),\quad n\leq N(t),~\tau\geq t; \nonumber\\
	& \sum_{\tau=t}^T p_n(\tau) = P_n(t), \quad n\leq N(t);\nonumber\\
	& \underline{q}(\tau) \leq q(\tau) \leq \overline{q}(\tau), \quad \tau\geq t;\nonumber\\
	& \sum_{\tau=t}^T q(\tau) = \mathbb{E}(A(t)),\nonumber
	\end{align*}
In this formulation, $P_n(t) = P_n - \sum_{\tau=1}^{t-1}p_n(\tau)$ is the energy to be consumed at or after time $t$, for all $n$ and all $t$. {\nedit Here $q$ can be viewed as ``pseudo-load'' with the constraint that it sums to the expected future energy request $\mathbb{E}(A(t))$. The constraints $\underline{q}$, $\overline{q}$ are predicted values of maximum and minimum energy request from historical data with $\underline{q}(t)=\overline{q}(t)=0$. However, if no prediction is available, we can simply set $\underline{q}(\tau) = 0$ and $\bar{q}(\tau) = \mathbb{E}(A(t))$ without affecting the theoretical guarantees of the algorithm.}


Importantly, if predictions are exact then Algorithm \ref{algorithm: uncertainty} solves ODLC exactly.  Further, prior papers have shown that Algorithm \ref{algorithm: uncertainty} can be run in a completely distributed manner and still ensure (fast) convergence to optimal solutions \cite{gan2013}.


For our purposes, the most relevant part of previous studies of Algorithm \ref{algorithm: uncertainty} is that there exists simple characterizations of the solutions to ODLC-$t$, which prove quite useful when analyzing the performance of the algorithm.

Specifically, in cases where there are a large number of deferrable loads, the solutions to ODLC-$t$ satisfy a property that is referred to as $t$-valley-filling.

\begin{definition}
For any time $t=1,\ldots,T$, a feasible schedule $(p,q)$ is called \textbf{$t$-valley-filling}, if there exists $C(t)\in\mathbb{R}$ such that
\begin{equation}\label{bounded model}
\sum_{n=1}^{N(t)} p_n(\tau) + q(\tau) + b_t(\tau) =C(t), \qquad \tau=t,\ldots,T.
\end{equation}
\end{definition}

\begin{proposition}[\cite{gan2013}]
\label{thm: characterization}
At time $t=1,\ldots,T$, a $t$-valley-filling deferrable load schedule, if it exists, solves ODLC-$t$.
\end{proposition}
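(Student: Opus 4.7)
The plan is to bound the ODLC-$t$ objective from below by a quantity determined entirely by the equality constraints, and then to observe that any $t$-valley-filling schedule attains this bound.

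First, I would denote the aggregate load under any feasible $(p,q)$ by $d(\tau) := \sum_{n=1}^{N(t)} p_n(\tau) + q(\tau) + b_t(\tau)$ for $\tau = t,\ldots,T$, so that the ODLC-$t$ objective is $\sum_{\tau=t}^{T} d(\tau)^2$. Summing $d(\tau)$ over $\tau \in \{t,\ldots,T\}$ and invoking the equality constraints $\sum_{\tau=t}^T p_n(\tau) = P_n(t)$ and $\sum_{\tau=t}^T q(\tau) = \mathbb{E}(A(t))$, I obtain
\[
\sum_{\tau=t}^T d(\tau) \;=\; \sum_{n=1}^{N(t)} P_n(t) \,+\, \mathbb{E}(A(t)) \,+\, \sum_{\tau=t}^T b_t(\tau) \;=:\; S,
\]
where $S$ is a constant independent of the particular feasible $(p,q)$ chosen.

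Next I would invoke the Cauchy--Schwarz inequality (equivalently, Jensen's inequality applied to the convex function $x \mapsto x^2$) to deduce
\[
\sum_{\tau=t}^T d(\tau)^2 \;\geq\; \frac{1}{T-t+1}\left(\sum_{\tau=t}^T d(\tau)\right)^{\!2} \;=\; \frac{S^2}{T-t+1},
\]
with equality if and only if $d(\tau)$ is constant over $\tau \in \{t,\ldots,T\}$, which is precisely the $t$-valley-filling condition \eqref{bounded model}. Because the right-hand side depends only on $S$ and $T-t+1$, neither of which varies with the feasible schedule, any $t$-valley-filling $(p,q)$ meets this universal lower bound and is therefore optimal for ODLC-$t$.

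The argument has essentially no obstacle: it is a one-line Cauchy--Schwarz bound once one notices that the sum $\sum_{\tau=t}^T d(\tau)$ is pinned down by the constraints. The only thing requiring care is to correctly aggregate all three contributions to $d(\tau)$ (the device loads, the pseudo-load $q$, and the baseload prediction $b_t$) and to use both the per-device energy constraint and the total pseudo-load constraint when verifying that $S$ does not depend on the choice of feasible $(p,q)$. Notably, the argument does not require uniqueness of the valley-filling schedule: any feasible schedule satisfying \eqref{bounded model} is a minimizer.
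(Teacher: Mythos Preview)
Your argument is correct: the equality constraints fix $\sum_{\tau=t}^T d(\tau)=S$, and then Cauchy--Schwarz (or Jensen) gives $\sum_{\tau=t}^T d(\tau)^2 \geq S^2/(T-t+1)$ with equality exactly when $d(\tau)$ is constant, which is the $t$-valley-filling condition \eqref{bounded model}. There is nothing to compare against in this paper, since Proposition~\ref{thm: characterization} is not proved here but merely quoted from \cite{gan2013}; your self-contained proof is the standard one and would be entirely appropriate to include.
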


This characterization provides a strong basis for the performance analysis of Algorithm \ref{algorithm: uncertainty}.  To see this, note that if there exists a $t$-valley-filling solution then, besides being optimal, it ensures that the aggregate load satisfies
\begin{equation}
	\label{model}
	d(t) =\frac{1}{T-t+1}\left( \sum_{n=1}^{N(t)} P_n(t) + \ex(A(t)) + \sum_{\tau=t}^Tb_t(\tau) \right)
	\end{equation}
for $t=1,2,\ldots,T$.  This property {\nedit tend to be satisfied when the penetration of deferrable load is high, and it gives us a nice structure to analyze the load variance obtained by Algorithm \ref{algorithm: uncertainty}. Subsequently, we assume that a $t$-valley-filling exists for each $t$ throughout the paper}.

\section{Performance analysis}

The main focus of this paper is the performance analysis of model predictive deferrable load control (Algorithm \ref{algorithm: uncertainty}).  As discussed, the algorithm has been introduced in \cite{gan2013} followed by the \textit{average-case} performance analysis.  The goal of this paper is to perform a \textit{distributional analysis}, rather than simply average-case analysis.  However, to provide context we first introduce the previous average-case analysis and contrast it with a (novel) worst-case analysis.


\subsection{Average-case analysis (previous work)}

An average-case analysis of Algorithm \ref{algorithm: uncertainty} was performed in \cite{gan2013}.  The following is the main result from that paper.

\begin{proposition}[\cite{gan2013}]\label{prop: expectedV}
If a $t$-valley-filling solution exists for $t=1,2,\ldots,T$, then the expected load variation obtained by Algorithm \ref{algorithm: uncertainty} is
\begin{equation}\label{expected load variance}
\ex(V) = \frac{s^2}{T}\sum_{t=2}^T\frac{1}{t} + \frac{\sigma^2}{T^2}\sum_{t=0}^{T-1}F^2(t)\frac{T-t-1}{t+1}.
\end{equation}
where $F(t):=\sum_{m=0}^t f(m)$ for $t=0,\ldots,T$.
\end{proposition}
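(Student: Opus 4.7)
The plan is to exploit the valley-filling characterization \eqref{model} to derive a one-step recursion for $d(t)$ that reveals it as a random walk with independent, mean-zero increments, and then to compute the variance structure by routine algebra.

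The first step is to derive the key recursion
$$d(t+1)-d(t) \;=\; \frac{1}{T-t}\Big(a(t+1)-\lambda + e(t+1)\, F(T-t-1)\Big), \qquad t=1,\ldots,T-1.$$
Starting from \eqref{model} written at times $t$ and $t+1$, I would subtract $(T-t)\,d(t+1)$ from $(T-t+1)\,d(t)$ using the bookkeeping identities: (i) $\sum_{n\le N(t+1)} P_n(t+1) = \sum_{n\le N(t)} P_n(t) - \sum_{n\le N(t)} p_n(t) + a(t+1)$, combined with $\sum_{n\le N(t)} p_n(t) = d(t)-b(t)$ from the definition of $d(t)$; (ii) $\mathbb{E}(A(t+1))=\mathbb{E}(A(t))-\lambda$; (iii) $b_{t+1}(\tau)-b_t(\tau) = e(t+1) f(\tau-t-1)$, so that $\sum_{\tau=t+1}^T [b_{t+1}(\tau)-b_t(\tau)] = e(t+1)\,F(T-t-1)$; and (iv) $b_t(t)=b(t)$ since $f$ is causal. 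The total remaining energy to schedule is conserved, and the surprise between steps is exactly captured by the two innovations $a(t+1)-\lambda$ and $e(t+1)$.

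Since $\{a(t)\}$ and $\{e(t)\}$ are sequences of independent zero-mean (after centering) random variables and are mutually independent across $t$, the increment $X(t+1):=d(t+1)-d(t)$ has mean $0$ and is independent of $d(1),X(2),\ldots,X(t)$. In particular $\mathbb{E}\,d(t)$ is constant in $t$, and writing $M_t := d(t)-\mathbb{E}\,d(t)$ and $v(u):=\mathrm{Var}(X(u))$ for $u\ge 2$, $v(1):=\mathrm{Var}(d(1))$, I obtain $v(u)=\dfrac{s^2+\sigma^2 F^2(T-u)}{(T-u+1)^2}$ for all $u=1,\ldots,T$ (the $u=1$ case uses $\sum_{\tau=1}^T b_1(\tau)=\sum\bar b(\tau)+e(1)F(T-1)$ and $\sum_{n\le N(1)}P_n=a(1)$). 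The martingale property gives $\mathbb{E}(M_tM_s)=\sum_{u=1}^{\min(t,s)} v(u)$.

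Finally, since $\mathbb{E}\,d(t)$ does not depend on $t$,
$$\mathbb{E}\,V \;=\; \frac{1}{T}\sum_{t=1}^T \mathbb{E}(M_t^2) \;-\; \mathbb{E}\!\left(\tfrac{1}{T}\sum_{t=1}^T M_t\right)^{\!2}.$$
Substituting $\mathbb{E}(M_t^2)=\sum_{u=1}^t v(u)$ and $\mathbb{E}(\bar M^2) = \frac{1}{T^2}\sum_u v(u)(T-u+1)^2$ (by counting pairs $(t,s)$ with $\min(t,s)\ge u$), the two sums telescope into
$$\mathbb{E}\,V \;=\; \frac{1}{T^2}\sum_{u=1}^{T} v(u)\,(T-u+1)(u-1).$$
Plugging in $v(u)$ and substituting $t=T-u+1$ (so $F^2(T-u)=F^2(t-1)$) splits the sum into the two pieces claimed in \eqref{expected load variance}, after noting $\sum_{t=1}^T \tfrac{T-t}{t} = T\,\sum_{t=2}^T\tfrac{1}{t}$.

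The main obstacle is step one: correctly tracking the ``state'' evolution across time, namely $N(t)$, $P_n(t)$, and $b_t(\tau)$, to verify that the total-energy increment between consecutive valley-filling problems is precisely the two-term innovation $a(t+1)-\lambda+e(t+1)F(T-t-1)$. Once that identity is in hand, the rest reduces to independent-increment variance algebra.
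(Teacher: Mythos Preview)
Your argument is correct. The paper does not prove this proposition itself (it is quoted from \cite{gan2013}), but the machinery it imports in the Appendix is exactly what your recursion produces: unrolling $d(t+1)-d(t)=\tfrac{1}{T-t}\big(x(t+1)+e(t+1)F(T-t-1)\big)$ and subtracting the time average gives precisely the quadratic forms $V=\tfrac{1}{T}\|Bx\|_2^2+\tfrac{1}{T}\|Ce\|_2^2$ with the matrices $B,C$ defined there, and your covariance sum $\tfrac{1}{T^2}\sum_u v(u)(T-u+1)(u-1)$ is the same computation as $\mathbb{E}V=\tfrac{s^2}{T}\,\mathrm{tr}(B^TB)+\tfrac{\sigma^2}{T}\,\mathrm{tr}(C^TC)$ carried out in the proof of Theorem~\ref{thm: tail-bound}. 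The only difference is packaging: you emphasize the independent-increment (martingale) structure of $d(t)$, while the paper works with the equivalent static matrix representation and traces; neither buys anything the other lacks for this result.
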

 Proposition \ref{prop: expectedV} explicitly 
highlights that $\ex(V)\to 0$ as the predictions get precise, i.e., $\sigma\rightarrow0$ and $s\rightarrow0$. More importantly, it follows from Proposition \ref{prop: expectedV} that $\ex(V)$ tends to 0 as time horizon $T$ increases, provided that the error correlation $f(t)$ decays sufficiently fast with $t$.

\begin{proposition}[\cite{gan2013}]\label{prop: large T}
If \eqref{model} holds, and the error correlation $f \sim O(t^{-\frac{1}{2}-\alpha})$ for some $\alpha >0$, then $\ex(V) \rightarrow 0$ as $T \rightarrow \infty$.
\end{proposition}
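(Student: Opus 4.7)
The plan is to start from the closed-form expression for $\ex(V)$ given by Proposition \ref{prop: expectedV} and show that each of the two summands vanishes as $T\to\infty$. The first piece $\frac{s^2}{T}\sum_{t=2}^T \frac{1}{t}$ is harmless: the harmonic sum is $O(\log T)$, so this term is $O(\log T / T)\to 0$ with no hypothesis on $f$ needed. All the work is in the second piece $\frac{\sigma^2}{T^2}\sum_{t=0}^{T-1} F^2(t)\,\frac{T-t-1}{t+1}$.

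For that piece, the plan is first to control $F(t)=\sum_{m=0}^t f(m)$ under the hypothesis $f(m)=O(m^{-1/2-\alpha})$. I would split into two regimes. If $\alpha\ge 1/2$, then $\sum_m m^{-1/2-\alpha}$ converges, so $F(t)=O(1)$ uniformly in $t$. If $0<\alpha<1/2$, then comparing to the integral gives $F(t)=O(t^{1/2-\alpha})$, and in particular $F^2(t)=O(t^{1-2\alpha})$. It is convenient to write a single bound $F^2(t)=O(\max(1,t^{1-2\alpha}))$ that covers both cases.

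Next I would use the crude but sufficient inequality $\frac{T-t-1}{t+1}\le \frac{T}{t+1}$ to decouple the two factors, so that
\begin{equation*}
\frac{\sigma^2}{T^2}\sum_{t=0}^{T-1}F^2(t)\,\frac{T-t-1}{t+1} \;\le\; \frac{\sigma^2}{T}\sum_{t=0}^{T-1}\frac{F^2(t)}{t+1}.
\end{equation*}
In the regime $\alpha\ge 1/2$, the right side is $O(\log T/T)\to 0$. In the regime $0<\alpha<1/2$, plugging in $F^2(t)=O(t^{1-2\alpha})$ reduces the bound to $\frac{C\sigma^2}{T}\sum_{t=1}^{T-1} t^{-2\alpha}$, whose inner sum grows at most like $T^{1-2\alpha}/(1-2\alpha)$, yielding the overall rate $O(T^{-2\alpha})\to 0$. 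The boundary case $\alpha=1/2$ gives $O(\log T/T)$. Combining the two terms, $\ex(V)\to 0$.

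The main obstacle I anticipate is simply making the bound on $F(t)$ uniform across the subcases of $\alpha$ so that the argument does not fragment; using $F^2(t)=O(\max(1,t^{1-2\alpha}))$ and the telescoping inequality $\sum_{t=0}^{T-1}(T-t-1)/(t+1)=O(T\log T)$ for the bounded-$F$ case keeps the bookkeeping compact. No delicate cancellation or concentration is required — the argument is a clean estimate of two explicit sums, with the decay hypothesis on $f$ precisely chosen so that $F^2(t)/(t+1)$ is summable (or nearly so) on the relevant scale.
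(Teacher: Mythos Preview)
The paper does not actually contain a proof of this proposition: it is quoted verbatim from \cite{gan2013} and only commented on, so there is no in-paper argument to compare against. Your derivation from the closed form \eqref{expected load variance} is the natural one and is correct in outline and conclusion.

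One small slip worth fixing: you write that for $\alpha\ge 1/2$ the series $\sum_m m^{-1/2-\alpha}$ converges and hence $F(t)=O(1)$, but at $\alpha=1/2$ this is the harmonic series, so in fact $F(t)=O(\log t)$ there. Consequently the boundary rate is $O\!\left((\log T)^3/T\right)$ rather than $O(\log T/T)$; the conclusion $\ex(V)\to 0$ is unaffected. Cleanest is to split into $\alpha>1/2$ (where $F$ is bounded), $\alpha=1/2$ (where $F(t)=O(\log t)$), and $0<\alpha<1/2$ (where $F(t)=O(t^{1/2-\alpha})$), exactly as you do for the last case.
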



This condition is practically relevant since the error correlation $f(t)$ usually decays fast with $t$ and the time horizon $T$ is usually long, which implies that Algorithm \ref{algorithm: uncertainty} should typically have good average case performance.

\subsection{Worst-case analysis}

The results surveyed above highlight that Algorithm \ref{algorithm: uncertainty} performs well on average; however,  it is often important to guarantee more than average case performance.  For that reason, many results in the literature focus on worst case, e.g., \cite{lee1997worst, lin2012online, bemporad1999robust}.  
While no existing results apply directly to the setting of this paper, we can show that the worst-case performance of Algorithm \ref{algorithm: uncertainty} is quite bad.

To see this, let us consider a setting where the prediction error for generation, $e$, and deferrable load, $a$, have bounded deviations from their means (0 and $\lambda$ respectively).

\begin{definition}
We say that \textbf{prediction errors are bounded} if there exist $\epsilon_1$ and $\epsilon_2$ such that, at any time $t=1, \ldots, T$,
\begin{equation}
	\label{infinity norm}
	|a(t) - \lambda| \le \epsilon_1, ~|e(t)|\leq \epsilon_2.
	\end{equation}
\end{definition}

In this situation, it is straightforward to see that the worst case performance of Algorithm \ref{algorithm: uncertainty} can potentially be quite bad. For $a,b\in\mathbb{R}$, define $a\vee b :=\max\{a,b\}.$

\begin{proposition}
If a $t$-valley-filling solution exists for $t=1,2,\ldots,T$, and prediction errors are bounded by $\epsilon_1$ and $\epsilon_2$ as in \eqref{infinity norm}, then the worst-case load variation $\sup_{a,e}V$ achieved by Algorithm \ref{algorithm: uncertainty} is
	\begin{align*}
	\sup_{a,e}~ V &~=~ \epsilon_1^2 \left(1- \frac{1}{T}\sum_{k=1}^T\frac{1}{k}\right) \\
	& \qquad + \frac{\epsilon_2^2}{T^2} \sum_{\tau=0}^{T-1}\sum_{s=0}^{T-1} \left(\frac{T}{\tau\vee s+1}-1\right) |F(\tau)F(s)|.
	\end{align*}
\label{prop: worst-case}
\end{proposition}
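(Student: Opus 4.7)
The plan is to mirror the computation behind the average-case result in Proposition \ref{prop: expectedV}, replacing the expectation by a pointwise supremum over the two $\ell_\infty$ balls. The first step is to use the $t$-valley-filling representation \eqref{model} together with the prediction formula \eqref{prediction} to write $d(t)$ explicitly as an affine function of the innovations $e(1),\ldots,e(t)$ and the arrival deviations $a(1)-\lambda,\ldots,a(t-1)-\lambda$. Expanding $\sum_{\tau=t}^T b_t(\tau)$ via \eqref{prediction} produces a linear combination of the $e(m)$ with coefficient $F(T-m)-F(t-m-1)$, while $\sum_{n\le N(t)} P_n(t)$ can be unrolled using the recursion $P_n(t+1)=P_n(t)-p_n(t)$ together with the valley-filling identities at each earlier time, yielding a linear combination of the past $a(m)-\lambda$'s and $e(m)$'s. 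Subtracting $\bar d=\frac{1}{T}\sum_t d(t)$ cancels the deterministic part, leaving $d(t)-\bar d$ as an explicit linear functional of the two error vectors.

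Once this expansion is in hand, $V=\frac{1}{T}\sum_t(d(t)-\bar d)^2$ becomes an explicit quadratic form in $(e,\delta a)$ with $\delta a:=a-\lambda\mathbf{1}$. A direct calculation --- the same bookkeeping as in the proof of Proposition \ref{prop: expectedV} in \cite{gan2013}, but without taking expectations --- identifies the kernel of the $e$-block as $\frac{1}{T^2}\bigl(\frac{T}{\tau\vee s+1}-1\bigr)F(\tau)F(s)$ and produces a corresponding kernel for the $\delta a$-block whose worst-case value over the $\epsilon_1$-ball simplifies to $\epsilon_1^2\bigl(1-\frac{1}{T}\sum_{k=1}^T\frac{1}{k}\bigr)$. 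A useful consistency check is that the diagonal entries of both kernels must, after integrating against $\sigma^2$ and $s^2$, reproduce exactly the two terms of \eqref{expected load variance}. The off-diagonal coupling between $e$ and $\delta a$ needs to be shown to vanish after summation in $t$; in the expected case it vanishes trivially by independence, and the same mean-centering identity that makes it vanish in expectation also makes it vanish pointwise.

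The final step is the maximization over the two $\ell_\infty$ balls. The decisive structural fact is that the scalar weights $\frac{T}{\tau\vee s+1}-1$ are nonnegative throughout $\{0,\ldots,T-1\}^2$ (since $\tau\vee s+1\le T$), so choosing $e(m)=\epsilon_2\,\mathrm{sign}\bigl(F(m-1)\bigr)$ simultaneously aligns every term of the $e$-quadratic form, yielding $\frac{\epsilon_2^2}{T^2}\sum_{\tau,s}\bigl(\frac{T}{\tau\vee s+1}-1\bigr)|F(\tau)F(s)|$ while saturating the $\ell_\infty$ constraint. A parallel sign-alignment for $\delta a$ delivers the $\epsilon_1^2$ term, and summing gives the claimed formula, with the realized $(e,a)$ witnessing attainment of the supremum.

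The main obstacle I expect is the bookkeeping in Step 1 --- unrolling the recursion for $\sum_{n\le N(t)} P_n(t)$, which depends on every earlier valley-filling decision and therefore on all past innovations --- together with the associated verification in Step 2 that the $e$--$\delta a$ cross block really integrates to zero in $t$, not merely in expectation. Once the quadratic-form description is established, the nonnegativity of the scalar weights makes the worst-case maximization reduce to coordinatewise sign alignment, with no nontrivial combinatorial (max-cut-type) obstruction to resolve.
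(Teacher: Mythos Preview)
Your overall strategy—express $V$ as a quadratic form in the error vectors, observe that the kernel entries $\frac{T}{\tau\vee s+1}-1\ge 0$ are nonnegative, and maximize by coordinatewise sign alignment—is exactly the paper's approach. The paper does not rederive the quadratic form from the valley-filling recursion: it takes the additive decomposition $V=V_1(\delta a)+V_2(e)$ from \cite{gan2013} as a starting point, rewrites each piece as $\frac{1}{T^2}x^T(A-\one\one^T)x$ and $\frac{1}{T^2}y^T(A-\one\one^T)y$ with $y(\tau)=e(\tau)F(T-\tau)$, checks that $A_{\tau s}-1=\frac{\tau\wedge s-1}{T-\tau\wedge s+1}\ge 0$, and reads off the maximizers $x\equiv\pm\epsilon_1$ and $e(t)=\pm\epsilon_2\,\mathrm{sgn}(F(T-t))$.

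The one place your plan departs from this is the cross-term step, and the resolution you propose will not go through. If you unroll the recursion in Step~1 you will find that $d(t)-\bar d$ depends on $\delta a$ and $e$ only through the single combined vector $z(\tau)=(a(\tau)-\lambda)+e(\tau)F(T-\tau)$; concretely, $d(t)=d(t-1)+\frac{1}{T-t+1}\,z(t)$. Hence $V=\frac{1}{T^2}z^T(A-\one\one^T)z$, and the $e$--$\delta a$ cross block is $\frac{2}{T^2}x^T(A-\one\one^T)y$, which is generically nonzero. In the average-case computation this term drops out by \emph{independence} of $a$ and $e$ (so that $\ex[x(\tau)y(s)]=0$), not by any mean-centering or summation-in-$t$ identity; there is no pointwise cancellation. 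The paper does not argue this separately—it simply imports $V=V_1+V_2$ from \cite{gan2013}. So the step you correctly flag as the main obstacle cannot be discharged the way you suggest; either invoke the reference as the paper does, or be prepared for an additional $\epsilon_1\epsilon_2$ cross term in the final supremum.
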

The worst-case performance is achieved when all prediction errors has the maximum magnitude with the appropriate signs---the case where $a(t)=\lambda+\epsilon_1$ and $e(t)=\epsilon_2\cdot\mathrm{sgn}(F(T-t))$ for all $t$. {\nedit  The proof of this proposition can be found in the technical report \cite{chen2014}.}

\begin{corollary}
If a $t$-valley-filling solution exists for $t=1,2,\ldots,T$, and prediction errors are bounded by $\epsilon_1$ and $\epsilon_2$ as in \eqref{infinity norm}, then the worst-case load variation $\sup_{a,e}V$ achieved by Algorithm \ref{algorithm: uncertainty} is lower bounded as
	\[ \sup_{a,e}~ V
	~\geq~ \epsilon_1^2 \left(1- \frac{1}{T}\sum_{k=1}^T\frac{1}{k}\right)
	~\approx~ \epsilon_1^2 \left(1- \frac{\ln T}{T}\right). \]
\label{cor: worst-case}
\end{corollary}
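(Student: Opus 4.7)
The plan is to derive the corollary as an immediate consequence of Proposition \ref{prop: worst-case} by dropping the $\epsilon_2^2$ contribution and invoking standard asymptotics for the harmonic sum. The proposition already gives the exact value of $\sup_{a,e} V$ as a sum of two terms: one depending on the deferrable-load arrival bound $\epsilon_1$ and the other depending on the baseload prediction error bound $\epsilon_2$. The strategy is simply to retain the first term and discard the second.

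First I would verify that the discarded second term is non-negative. For every pair $(\tau,s)\in\{0,1,\ldots,T-1\}^2$ we have $\tau\vee s\le T-1$, so $\tau\vee s+1\le T$, and therefore $T/(\tau\vee s+1)-1\ge 0$. Since $|F(\tau)F(s)|\ge 0$ and $\epsilon_2^2/T^2\ge 0$, every summand is non-negative and the entire double sum is non-negative. Hence
\[
\sup_{a,e}V \;\ge\; \epsilon_1^2\left(1-\frac{1}{T}\sum_{k=1}^T\frac{1}{k}\right),
\]
which is the first half of the claim.

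Finally, I would invoke the classical asymptotic $\sum_{k=1}^T 1/k = \ln T + \gamma + O(1/T)$. Dividing by $T$ gives $\frac{1}{T}\sum_{k=1}^T 1/k = \frac{\ln T}{T}+o(1/T)$, so for large $T$ the lower bound is approximately $\epsilon_1^2\bigl(1-\frac{\ln T}{T}\bigr)$, matching the stated approximation.

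There is essentially no obstacle here: the corollary is a one-line weakening of the equality in Proposition \ref{prop: worst-case}. The only nontrivial observation is that the $\epsilon_2$-term in the proposition is manifestly non-negative term-by-term, which is immediate from $\tau\vee s\le T-1$. Note that $\epsilon_1^2(1-\frac{1}{T}\sum_{k=1}^T\frac{1}{k})\to\epsilon_1^2$ as $T\to\infty$, so this already shows that Algorithm \ref{algorithm: uncertainty} has worst-case load variation bounded away from zero, unlike the average-case guarantee in Proposition \ref{prop: large T}.
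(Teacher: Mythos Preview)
Your proposal is correct and matches the paper's approach: the corollary is stated immediately after Proposition~\ref{prop: worst-case} without a separate proof, precisely because it follows by dropping the manifestly non-negative $\epsilon_2^2$ term (your observation that $\tau\vee s\le T-1$ forces $T/(\tau\vee s+1)-1\ge 0$ is exactly the point) and applying the standard harmonic-sum asymptotic.
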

Interestingly, the form of Corollary \ref{cor: worst-case} implies that, in the worst-case, Algorithm \ref{algorithm: uncertainty} can be as bad as having no control at all: the time averaged load variation behaves like the worst one step load variation. Meanwhile, recall from Proposition \ref{prop: large T} that the average performance $\ex(V)\rightarrow0$ as $T\rightarrow\infty$. Hence, while the the load variation $V$ has a small mean $\ex(V)$, it can be quite large in the worst case.

	
\section{Distributional analysis}
\label{s.analysis}

The contrast between the worst-case analysis (Proposition \ref{prop: worst-case}) and average-case analysis (Proposition \ref{prop: expectedV}) motivates the main goal of this paper --- to understand how often the ``bad cases,'' where $V$ takes large values, happen.  That is, we want to understand what the typical variations of $V$ obtained by Algorithm \ref{algorithm: uncertainty} look like.

\subsection{Concentration bounds}
We start with analyzing the tail probability of $V$. Concretely, our focus is on
\[ V_\eta := \min \{ c\in\mathbb{R} \mid V\leq c \text{ with probability }\eta \}, \]
which denotes the minimum value $c$ such that $V\leq c$ with probability $\eta$ for $\eta\in[0,1]$. Our main result provides upper bounds on $V_\eta$, for large values of $\eta$, for arbitrary of prediction error distributions.

More specifically, we prove that \textit{with high probability}, the load variation of Algorithm \ref{algorithm: uncertainty} does not deviate much from its average-case performance, i.e., we prove a concentration result for model predictive deferrable load control.

\begin{theorem}
Suppose a $t$-valley filling solution exists for $t=1,2,\ldots,T$, and prediction errors bounded by $\epsilon_1$ and $\epsilon_2$ as in \eqref{infinity norm}. 
Then the distribution of the load variation $V$ obtained by Algorithm \ref{algorithm: uncertainty} satisfies a Bernstein type concentration, i.e.,
\begin{equation}
\label{tail-bound}
\mathbb{P}(V - \ex{V} > t) \le  \exp\left(\frac{-t^2}{16\epsilon^2 \lambda_1 (2\ex V + t)}\right)
\end{equation}
where $\epsilon = \max(\epsilon_1, \epsilon_2)$ and
\[\lambda_1 = \max\left(\frac{\ln T}{T} , ~ \frac{1}{T^2}\sum_{t=0}^{T-1}F^2(t)\frac{T-t+1}{t+1}\right). \]
\label{thm: tail-bound}
\end{theorem}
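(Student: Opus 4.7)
The plan is to use the $t$-valley-filling structure to write the load variation $V$ as a PSD quadratic form in the centered, independent, bounded random variables $\xi=(e(1),\ldots,e(T),\,a(1)-\lambda,\ldots,a(T)-\lambda)$, and then apply a Hanson-Wright--type tail bound for quadratic forms of bounded variables.

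First I would unfold the $t$-valley-filling identity \eqref{model}: the baseload prediction sum $\sum_{\tau\geq t}b_t(\tau)$ is already linear in $e(1),\ldots,e(t)$ by the Wiener-filter form \eqref{prediction}, and the deferrable contribution $\sum_{n\leq N(t)}P_n(t)+\mathbb{E}(A(t))$ can be peeled back into $a(1)-\lambda,\ldots,a(t)-\lambda$ by induction on $t$ using the energy-balance recursion $\sum_{n\leq N(t+1)}P_n(t+1)=\sum_{n\leq N(t)}P_n(t)-(d(t)-b(t))+a(t+1)$. The end product is an explicit identity
\[ d(t)-\bar d \;=\; \sum_{m=1}^{t}\alpha_{t,m}\,(a(m)-\lambda)\;+\;\sum_{m=1}^{t}\beta_{t,m}\,e(m) \]
with deterministic coefficients $\alpha_{t,m},\beta_{t,m}$. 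These are exactly the coefficients implicit in the proof of Proposition \ref{prop: expectedV}, so squaring and summing in $t$ recovers the mean formula \eqref{expected load variance} and provides a sanity check.

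Substituting into $V$ yields $V=\xi^{\top}Q\xi$ for an explicit PSD matrix $Q$ that is block-diagonal across the $a$- and $e$-variables (the cross terms vanish in the quadratic form because the two groups are independent with zero mean). Independence also gives $\mathbb{E}V=s^{2}\,\mathrm{tr}(Q_a)+\sigma^{2}\,\mathrm{tr}(Q_e)$, again reproducing Proposition \ref{prop: expectedV}. Since $|\xi_i|\leq\epsilon$ almost surely, a Hanson-Wright--type inequality for bounded random variables (equivalently, a second-order Hoeffding chaos bound) produces
\[ \mathbb{P}\!\left(\xi^{\top}Q\xi-\mathbb{E}\xi^{\top}Q\xi>u\right)\;\leq\;\exp\!\left(\frac{-c\,u^{2}}{\epsilon^{4}\|Q\|_F^{2}+\epsilon^{2}\|Q\|\,u}\right). \]
The routine inequality $\|Q\|_F^{2}\leq\|Q\|\cdot\mathrm{tr}(Q)$ together with $\epsilon^{2}\,\mathrm{tr}(Q)\geq\mathbb{E}V$ then converts the right-hand side into the Bernstein form \eqref{tail-bound}, provided one can establish $\|Q\|\leq C\lambda_{1}$ for a universal constant $C$.

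The main obstacle is precisely this operator-norm bound. For the $a$-block the matrix is upper-triangular with rows that decay like $1/(T-t+1)$; a harmonic-sum estimate produces the $\ln T/T$ term of $\lambda_{1}$. For the $e$-block each $\beta_{t,m}$ can be written as a difference of partial sums $F(\cdot)$ of the filter, and a direct Cauchy--Schwarz argument over $t$ gives the second term of $\lambda_{1}$. Once $\|Q\|\leq C\lambda_{1}$ is in hand, all remaining absolute constants can be absorbed into the factor $16$ appearing in \eqref{tail-bound}, completing the proof.
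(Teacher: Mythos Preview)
Your quadratic-form setup and the plan to invoke a Hanson--Wright inequality are reasonable, but the conversion step you describe does not prove the bound \eqref{tail-bound} as stated. You write that $\epsilon^{2}\,\mathrm{tr}(Q)\geq\mathbb{E}V$ (which is correct, since $s^{2}\leq\epsilon_{1}^{2}$ and $\sigma^{2}\leq\epsilon_{2}^{2}$) and then claim this ``converts the right-hand side into the Bernstein form \eqref{tail-bound}.'' But the direction is wrong: from Hanson--Wright you get a denominator of order $\epsilon^{4}\|Q\|_{F}^{2}+\epsilon^{2}\|Q\|\,u$, and after $\|Q\|_{F}^{2}\leq\|Q\|\,\mathrm{tr}(Q)$ and $\|Q\|\leq C\lambda_{1}$ you are left with $\epsilon^{2}\lambda_{1}\bigl(\epsilon^{2}\mathrm{tr}(Q)+u\bigr)$. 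Replacing $\epsilon^{2}\mathrm{tr}(Q)$ by the \emph{smaller} quantity $\mathbb{E}V$ would shrink the denominator and hence tighten the bound, which is not a legal step. When $s^{2}\ll\epsilon_{1}^{2}$ or $\sigma^{2}\ll\epsilon_{2}^{2}$ there is no universal constant making $\epsilon^{2}\mathrm{tr}(Q)\leq C\,\mathbb{E}V$, so the Hanson--Wright route yields a valid Bernstein-type inequality but not the one in the theorem.

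The paper avoids this obstacle by a different mechanism. Writing $V=\tfrac{1}{T}\|Ay\|^{2}$ with $y=(x,e)$ and $A$ block-diagonal, it observes the \emph{self-bounding} identity
\[
\|\nabla V(y)\|^{2}=\frac{4}{T^{2}}\|A^{\top}Ay\|^{2}\leq\frac{4}{T}\,\lambda_{\max}\!\left(\tfrac{1}{T}AA^{\top}\right)\|Ay\|^{2}=4\lambda_{\max}\,V,
\]
so that $h:=V-\mathbb{E}V$ satisfies $\|\nabla h\|^{2}\leq 4\lambda_{\max}(h+\mathbb{E}V)$. Feeding this into a Log-Sobolev/Herbst argument (the paper's Lemma~\ref{lemma: self-bound}, derived from Lemma~\ref{lemma: log-sobolev}) produces exactly $\exp\!\bigl(-t^{2}/\bigl(16\epsilon^{2}\lambda_{\max}(2\mathbb{E}V+t)\bigr)\bigr)$, with $\mathbb{E}V$ appearing intrinsically rather than through a trace comparison. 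The final step bounds $\lambda_{\max}$ by the traces of $BB^{\top}/T$ and $CC^{\top}/T$, giving $\lambda_{1}$. If you want to keep your route, you would need to replace the generic Hanson--Wright inequality by this self-bounding argument; otherwise you obtain a bound with $\epsilon^{2}\mathrm{tr}(Q)$ in place of $\mathbb{E}V$, which is strictly weaker.

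A smaller point: your justification for block-diagonality of $Q$ (``cross terms vanish because the two groups are independent with zero mean'') only kills the \emph{expectation} of the cross terms, not the terms themselves in the pathwise quadratic form. The paper simply quotes the deterministic decomposition $V=V_{1}+V_{2}$ from \cite{gan2013}; you should either invoke that directly or carry the off-diagonal block through your estimates.
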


{\nedit The theorem is proved in the technical report \cite{chen2014}. The proof relies on the technical assumption that $t$-valley-filling profiles exist, which tends to be satisfied with high penetration of deferrable loads. However, in Section \ref{s.simulation}, it is shown that the concentration phenomenon still holds in real data traces when this assumption is removed.} 


Theorem \ref{thm: tail-bound} implies that the actual performance of Algorithm \ref{algorithm: uncertainty} does not deviate much from its mean. To illustrate this, consider the following example:
\begin{example}
Suppose that the baseload prediction is precise, i.e., $\epsilon_2=0$. Then the average load variation is
    \[ \ex[V] = \frac{s^2}{T} \sum_{t=2}^T\frac{1}{t} \approx s^2\ln T/T \]
and the tail bound in Theorem \ref{thm: tail-bound} can be simplified as
    \[ \mathbb{P}(V - \ex{V} > c\ex V) \le  \exp\left(-\frac{c^2}{2 + c}\frac{s^2}{16\epsilon^2 }\right). \]
Recall that constant $s$ is the variance of $a$ and constant $\epsilon$ is the maximum deviation of $a$ from its mean. The above expression shows that, with high probability, $V$ is at most a constant $c+1$ times of its mean $\ex V$.
\end{example}

More generally, the quantity $\lambda_1$ controls the decaying speed of the tail bound in \eqref{tail-bound}: the smaller $\lambda_1$, the faster the tail bound $\prob(V-\ex V>t)$ decays in $t$, and the load variation $V$ achieved by Algorithm \ref{algorithm: uncertainty} concentrates sharper around its mean $\ex V$. The following corollary highlights that $\lambda_1$ tends to 0 as $T$ increases, provided that the error correlation $f(t)$ decays fast enough in $t$. Note that the condition on $f$ is the same for Corollary \ref{cor: lambda1} and Proposition \ref{prop: large T}.

\begin{corollary}
\label{cor: lambda1}
Under the assumptions of Theorem \ref{thm: tail-bound}, if the error correlation $f \sim O(t^{-\frac{1}{2}-\alpha})$ for some $\alpha >0$, then $\lambda_1 \rightarrow 0$ as $T \rightarrow \infty$.
\end{corollary}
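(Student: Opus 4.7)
The plan is to deal with the two arguments of the maximum defining $\lambda_1$ separately. The first, $\ln T / T$, tends to $0$ trivially. So the entire task reduces to showing that
\[
S_T \;:=\; \frac{1}{T^2}\sum_{t=0}^{T-1} F^2(t)\,\frac{T-t+1}{t+1} \;\longrightarrow\; 0.
\]
Since $(T-t+1)/(t+1) \le (T+1)/(t+1)$, it suffices to bound $\frac{T+1}{T^2}\sum_{t=0}^{T-1} F^2(t)/(t+1)$, so the real work is to control the partial sum $F(t)=\sum_{m=0}^{t} f(m)$ under the hypothesis $|f(m)| \le K m^{-1/2-\alpha}$ for $m \ge 1$.

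Next I would split on the value of $\alpha$. If $\alpha > 1/2$, the series $\sum f(m)$ is absolutely convergent, so $F(t)$ is uniformly bounded and $\sum_{t=0}^{T-1} F^2(t)/(t+1) = O(\ln T)$, yielding $S_T = O(\ln T / T)$. If $\alpha = 1/2$, comparing with the integral gives $F(t) = O(\ln t)$, hence $\sum_{t} F^2(t)/(t+1) = O((\ln T)^3)$ and $S_T = O((\ln T)^3/T)$. Finally, if $0 < \alpha < 1/2$, the integral comparison $\sum_{m=1}^{t} m^{-1/2-\alpha} \le t^{1/2-\alpha}/(1/2-\alpha)$ gives $F(t) = O(t^{1/2-\alpha})$, so $F^2(t) = O(t^{1-2\alpha})$ and $\sum_{t=0}^{T-1}(t+1)^{-2\alpha} = O(T^{1-2\alpha})$, producing $S_T = O(T^{-2\alpha})$.

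In every regime the bound on $S_T$ vanishes as $T \to \infty$, so $\lambda_1 \to 0$, completing the proof. The only potentially delicate point is the case $\alpha < 1/2$, where $F(t)$ itself grows without bound and one must check that the factor $(t+1)^{-1}$ inside the sum — together with the overall $1/T^2$ normalization — is strong enough to kill the polynomial growth of $F^2(t)$; the integral comparison shows that it is. Everything else is routine harmonic-sum bookkeeping.
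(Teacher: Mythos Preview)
Your argument is correct. The paper does not give an explicit proof of this corollary; it simply remarks that the hypothesis on $f$ matches that of Proposition~\ref{prop: large T} (whose proof, in \cite{gan2013}, controls the essentially identical expression $\tfrac{1}{T^2}\sum_t F^2(t)(T-t-1)/(t+1)$ appearing in $\mathbb{E}(V)$), so your case split on $\alpha$ is exactly the computation the paper is implicitly invoking.
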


A detailed proof of Theorem \ref{thm: tail-bound} is included in the technical report \cite{chen2014}. Note that the bound we obtained in Theorem \ref{thm: tail-bound} is much sharper than the Markov and Chebyshev bounds for large $t$. This is done by controlling the moment generating function of $V$ using the Log-Sobolev inequality similar to the technique used in \cite{Bou09}.

\subsection{Bounds on the variance}

To further understand the scale of typical load variation $V$ under Algorithm \ref{algorithm: uncertainty}, it is useful to also study its variance.  In addition, the form of the variance highlights the impact of the tight concentration shown in Theorem \ref{thm: tail-bound}.

\begin{theorem}
Suppose a $t$-valley-filling solution exists for $t=1,2,\ldots,T$, and prediction errors are bounded by $\epsilon_1$ and $\epsilon_2$ as in \eqref{infinity norm}. Then the variance $\var(V)$ of $V$ obtained by Algorithm \ref{algorithm: uncertainty} is bounded above by
\begin{equation}
\label{upper bound on variance}
\var(V) \le \left(\frac{4\epsilon_1 s\ln T}{T}\right)^2 +  \left(\frac{4\epsilon_2 \sigma}{T^2} \sum_{t=0}^{T-1} F^2(t) \frac{T-t+1}{t+1}\right)^2.
\end{equation}
\label{thm: variance}
\end{theorem}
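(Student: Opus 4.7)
The plan is to exploit the fact that, under the $t$-valley-filling assumption, $V$ is a quadratic form in the independent mean-zero random variables $\{a(m)-\lambda\}_{m=1}^{T}$ and $\{e(m)\}_{m=1}^{T}$, and then to bound its variance directly using the standard variance identity for quadratic forms in independent bounded random variables, together with the moment estimates $\ex{(a(m)-\lambda)^{4}}\le \epsilon_{1}^{2} s^{2}$ and $\ex{e(m)^{4}}\le \epsilon_{2}^{2}\sigma^{2}$ that follow from the bounded-support assumption \eqref{infinity norm}.

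The first step is to unfold equation \eqref{model} and verify, by an induction on $t$ essentially identical to the one used in \cite{gan2013} to establish Proposition~\ref{prop: expectedV}, that there exist deterministic coefficients $\alpha^{a}_{t,m}$ and $\alpha^{e}_{t,m}$ supported on $m\le t$, with magnitudes controlled by $1/(T-t+1)$ and by the partial sums $F(\cdot)$ of the filter, such that
\[ d(t) - \ex{d(t)} \;=\; \sum_{m=1}^{t}\alpha^{a}_{t,m}\bigl(a(m)-\lambda\bigr) \;+\; \sum_{m=1}^{t}\alpha^{e}_{t,m}\,e(m). \]
These are essentially the same coefficients whose squared sums appear inside \eqref{expected load variance}.

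Next, after recentering by the sample mean $\bar d$ and expanding the square inside $V$, I would decompose $V = V^{aa} + V^{ae} + V^{ee}$, where $V^{aa}$ is a quadratic form in the $a$-deviations alone, $V^{ee}$ is a quadratic form in the $e$'s alone, and $V^{ae}$ is a bilinear cross form. Because $\{a(m)-\lambda\}$ and $\{e(m)\}$ are independent and mean zero, a short computation shows that $V^{aa}-\ex V^{aa}$, $V^{ae}$, and $V^{ee}-\ex V^{ee}$ are pairwise uncorrelated, so that $\var(V) = \var(V^{aa}) + \var(V^{ae}) + \var(V^{ee})$. Applying the standard quadratic-form variance identity to $V^{aa}$, the coefficient sums collapse into a double harmonic sum, and combined with the $\epsilon_{1}^{2}s^{2}$ factor coming from the fourth-moment bound this yields the first term of \eqref{upper bound on variance}. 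A parallel calculation on $V^{ee}$, using the explicit form of $\alpha^{e}_{t,m}$ in terms of $F$, produces the second term.

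The main obstacle is controlling the bilinear cross term $V^{ae}$: although $\ex V^{ae} = 0$, one must rule out a spurious $\epsilon_{1}\epsilon_{2}$ contribution that would fall outside the stated bound. I would handle this by conditioning on the arrivals: given $a$, $V^{ae}$ is linear in $\{e(m)\}$ with coefficients bounded in absolute value by $\epsilon_{1}$ times the $\alpha^{a}$-coefficients, so its conditional variance against $e$ is at most a constant multiple of the $\epsilon_{2}^{2}$-type quantity already appearing in \eqref{upper bound on variance}. Taking expectations over $a$ and applying $2|xy|\le x^{2}+y^{2}$ then absorbs $\var(V^{ae})$ into the sum of the two squared terms on the right-hand side of \eqref{upper bound on variance}.
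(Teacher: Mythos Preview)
Your approach is workable but differs substantially from the paper's. The paper does not use any quadratic-form variance identity or fourth-moment estimates. Instead, it invokes the \emph{convex Poincar\'e inequality}: writing $V_1 = \frac{1}{T}\|Bx\|^2$ as a convex function of the bounded vector $x = (a(\tau)-\lambda)_\tau$, one has
\[
\var(V_1) \;\le\; 4\epsilon_1^2\,\ex\|\nabla V_1(x)\|^2 \;=\; \frac{16\epsilon_1^2}{T^2}\,\ex\|B^TBx\|^2,
\]
and the right-hand side is then bounded via $\lambda_{\max}(B^TB)\le \mathrm{tr}(B^TB)$ together with $\ex[\frac{1}{T}\|Bx\|^2] = s^2\,\mathrm{tr}(\frac{1}{T}B^TB)\le s^2\ln T/T$, giving exactly the first squared term in \eqref{upper bound on variance}. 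The same argument applied to $V_2 = \frac{1}{T}\|Ce\|^2$ produces the second term, and independence of $x$ and $e$ gives $\var(V)=\var(V_1)+\var(V_2)$.

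A second difference concerns your cross term. The paper asserts (citing \cite{gan2013}) the clean decomposition $V = V_1 + V_2$ with \emph{no} bilinear piece, so the obstacle you flag with $V^{ae}$ never arises in their argument, and your conditioning-plus-AM-GM absorption step is not needed in that framework. That said, your direct moment route does go through even if a cross term is present: with $\ex x_i^4\le\epsilon_1^2 s^2$, the PSD bound $\|M\|_F^2\le(\mathrm{tr}\,M)^2$, and $2ab\le a^2+b^2$ for the cross variance, one in fact recovers \eqref{upper bound on variance} with a constant smaller than~$16$.

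What each approach buys: the Poincar\'e route is short, avoids explicit moment algebra, and mirrors the Log-Sobolev machinery used for Theorem~\ref{thm: tail-bound}, so the two theorems are proved with essentially the same tool. Your direct computation is more elementary (no functional inequalities) and can be sharper in the constants, at the cost of more bookkeeping and having to dispose of the cross term explicitly.
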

To interpret this result, let $\overline{\var(V)}$ denote the upper bound on $\var(V)$ provided in \eqref{upper bound on variance}. Theorem \ref{thm: variance} implies that $\ex V$ and $\sqrt{\overline{\var(V)}}$ scale similarly with $T$. 

It immediately follows from the Chebyshev inequality that $V$ can only deviate significantly from $\ex(V)$ with a small probability.

\begin{corollary}
Under the assumptions in Theorem \ref{thm: variance}, for $t>0$,
    \begin{align}
    & \mathbb{P}(|V - \ex{V}| > t) \nonumber\\
    &~\le~ \frac{1}{t^2} \left[ \left(\frac{4\epsilon_1 s\ln T}{T}\right)^2 +  \left(\frac{4\epsilon_2 \sigma}{T^2} \sum_{\tau=0}^{T-1} F^2(\tau) \frac{T-\tau+1}{\tau+1}\right)^2 \right].
    \label{chebyshev}
    \end{align}
\end{corollary}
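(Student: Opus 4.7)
The plan is to observe that this corollary is an immediate consequence of Theorem \ref{thm: variance} combined with the classical Chebyshev inequality, so no new probabilistic machinery is needed; the work was already done in bounding $\var(V)$.

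First I would invoke Chebyshev's inequality applied to the random variable $V$: for any $t > 0$,
\[ \mathbb{P}(|V - \ex V| > t) \;\le\; \frac{\var(V)}{t^2}. \]
This step only requires that $V$ has a finite second moment, which is guaranteed under the assumption of bounded prediction errors in \eqref{infinity norm}, since under $t$-valley filling the expression \eqref{model} makes $d(t)$, and hence $V$, a bounded function of the prediction errors $\{e(m)\}$ and arrival variables $\{a(m)\}$.

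Next I would substitute the upper bound on $\var(V)$ established in Theorem \ref{thm: variance}, namely
\[ \var(V) \;\le\; \left(\frac{4\epsilon_1 s \ln T}{T}\right)^2 + \left(\frac{4\epsilon_2 \sigma}{T^2}\sum_{\tau=0}^{T-1} F^2(\tau)\frac{T-\tau+1}{\tau+1}\right)^2, \]
directly into the right-hand side of the Chebyshev bound. Pulling the factor $1/t^2$ in front yields exactly the inequality \eqref{chebyshev}, completing the proof.

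There is essentially no obstacle here; the only thing to verify carefully is that the hypotheses of Theorem \ref{thm: variance} (existence of a $t$-valley-filling solution for each $t$ and boundedness of prediction errors via $\epsilon_1, \epsilon_2$) coincide exactly with the hypotheses assumed in this corollary, so that the variance bound can be invoked without modification. The real content of the corollary lies in Theorem \ref{thm: variance}, and the Chebyshev step is a one-line application.
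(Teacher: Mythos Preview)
Your proposal is correct and matches the paper's approach exactly: the paper itself simply states that the corollary ``immediately follows from the Chebyshev inequality'' applied to the variance bound in Theorem~\ref{thm: variance}, which is precisely what you do.
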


While the tail bound \eqref{tail-bound} in Theorem \ref{thm: tail-bound} scales at least exponentially in $t$, the Chebyshev inequality only provides a tail bound \eqref{chebyshev} that scales inverse quadratically in $t$. Hence for large $t$, \eqref{tail-bound} provides a much tighter tail bound. However for small values of $t$, the tail bound \eqref{chebyshev} is usually tighter since the variance $\var(V)$ is well estimated in \eqref{upper bound on variance}.

Furthermore, the variance $\var(V)$ vanishes as $T$ expands, provided that $f(t)$ decays sufficiently fast as $t$ grows, as formally stated in the following corollary.

\begin{corollary}
Under the assumptions of Theorem \ref{thm: variance}, if the error correlation $f \sim O(t^{-\frac{1}{2}-\alpha})$ for some $\alpha >0$, then $\var(V) \rightarrow 0$ as $T \rightarrow \infty$.
\end{corollary}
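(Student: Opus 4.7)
The plan is to reduce this corollary directly to Corollary \ref{cor: lambda1} by inspecting the two terms in the variance bound of Theorem \ref{thm: variance}. Writing
\begin{equation*}
\var(V) \le A_T^2 + B_T^2,
\end{equation*}
where $A_T := 4\epsilon_1 s \ln T / T$ and $B_T := (4\epsilon_2 \sigma / T^2)\sum_{t=0}^{T-1} F^2(t)(T-t+1)/(t+1)$, it suffices to show that both $A_T\to 0$ and $B_T\to 0$ as $T\to\infty$. The first is immediate since $\ln T / T \to 0$, so no work is needed there.

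The second term is the key step. The crucial observation is that $B_T/(4\epsilon_2\sigma)$ is exactly one of the two quantities whose maximum defines $\lambda_1$ in Theorem \ref{thm: tail-bound}, so $B_T \le 4\epsilon_2 \sigma \cdot \lambda_1$. Corollary \ref{cor: lambda1} already asserts that $\lambda_1 \to 0$ under precisely the hypothesis $f\sim O(t^{-1/2-\alpha})$, so $B_T\to 0$, completing the proof. Thus the corollary is really a transparent consequence of chaining Theorem \ref{thm: variance} with Corollary \ref{cor: lambda1}.

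The main obstacle, then, is hidden inside the prior Corollary \ref{cor: lambda1} rather than in the present argument. To establish the vanishing of $\lambda_1$ from scratch (should one choose to unfold it), the natural approach would be casework on $\alpha$, since the partial-sum function $F(t)=\sum_{m=0}^t f(m)$ behaves qualitatively differently in three regimes: if $\alpha > 1/2$, then $F$ is bounded and $\frac{1}{T^2}\sum_{t} F^2(t)(T-t+1)/(t+1) = O(\ln T / T)$; if $\alpha = 1/2$, then $F(t) \sim \ln t$ and one picks up an extra logarithmic factor while still getting $o(1)$; if $\alpha \in (0, 1/2)$, then $F(t) \sim t^{1/2-\alpha}$, and comparing the sum to an integral gives a leading order of $T^{2-2\alpha}$, which once divided by $T^2$ yields $T^{-2\alpha} \to 0$. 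In all three cases one obtains vanishing, which is what Corollary \ref{cor: lambda1} packages; the present corollary simply squares the resulting quantity, which of course preserves the convergence to zero.
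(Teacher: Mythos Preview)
Your proposal is correct and follows the same route the paper implicitly takes: the paper does not spell out a proof but merely remarks that the condition on $f$ parallels that in Proposition~\ref{prop: large T}, leaving the reader to observe that the two terms in the bound of Theorem~\ref{thm: variance} coincide (up to constants) with $\ln T/T$ and the second argument of $\lambda_1$, both of which vanish under the stated decay hypothesis. Your reduction to Corollary~\ref{cor: lambda1} makes this explicit, and the optional unpacking into the three regimes for $\alpha$ is a reasonable sketch of how that prior corollary would be established.
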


\noindent Note that the condition on $f$ parallels that in Proposition \ref{prop: large T}.

\subsection{A case study}
\label{s.simulation}

Theorems \ref{thm: tail-bound} and \ref{thm: variance} provide theoretical guarantees that the load variance $V$ obtained by Algorithm \ref{algorithm: uncertainty} concentrates around its mean, if prediction errors are bounded as in \eqref{infinity norm} and error correlation decays sufficiently fast (c.f. Corollary 2). Thus, they give the intuition that the expected performance of Algorithm \ref{algorithm: uncertainty} is a useful metric to focus on, and does indeed give an indication of the ``typical'' performance of the algorithm.

However, our analysis is based on the assumption that a $t$-valley-filling solution exists, which relies on the penetration of deferrable load being high enough.  This is a necessary technical assumption for our analysis, and has been used by the previous analysis of Algorithm \ref{algorithm: uncertainty} as well, e.g., \cite{gan2013}.

Given this assumption in the analytic results, it is important to understand the robustness of the results to this assumption. To that end, here we provide a case study to demonstrate that this intuition is robust to the $t$-valley-filling assumption.

In our case study, we mimic the setting of \cite{gan2013}, where an average-case analysis of Algorithm \ref{algorithm: uncertainty} is performed.  In particular,  we use 24 hour residential load trace in the Southern California Edison (SCE) service area averaged over the year 2012 and 2013 \cite{sce_data} as the non-deferrable load, and wind power generation data from the Alberta Electric System Operator from 2004 to 2012 \cite{aeso_data}. The wind power generation data is scaled so that its average over 9 years corresponds to 30\% penetration level, and pick the wind generation of a random day as renewable during each run. We generate random prediction error in baseload and arrival of deferrable load similar to \cite{gan2013}.

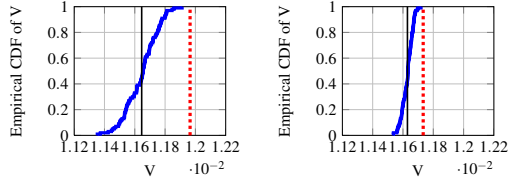
\begin{figure}[t]
	\begin{subfigure}[b]{0.2\textwidth}
	\resizebox{\columnwidth}{!}{
%
%
\begin{tikzpicture}

\begin{axis}[%
width=\textwidth,
height=0.85\textwidth,
unbounded coords=jump,
scale only axis,
xmin=0.0112,
xmax=0.0122,
xlabel={V},
xmajorgrids,
ymin=0,
ymax=1,
ylabel={Empirical CDF of V},
ymajorgrids
]
\addplot [color=blue,solid,line width=2.0pt,forget plot]
  table[row sep=crcr]{%
-inf	0\\
0.0113525833951026	0\\
0.0113525833951026	0.01\\
0.0113729896365413	0.01\\
0.0113729896365413	0.02\\
0.0114246228483665	0.02\\
0.0114246228483665	0.03\\
0.0114459010630518	0.03\\
0.0114459010630518	0.04\\
0.0114531888188776	0.04\\
0.0114531888188776	0.05\\
0.0114641957336751	0.05\\
0.0114641957336751	0.06\\
0.011474459025968	0.06\\
0.011474459025968	0.07\\
0.0114922980301803	0.07\\
0.0114922980301803	0.08\\
0.011493767465365	0.08\\
0.011493767465365	0.09\\
0.0114996443751665	0.09\\
0.0114996443751665	0.1\\
0.0115015898775566	0.1\\
0.0115015898775566	0.11\\
0.0115081430161832	0.11\\
0.0115081430161832	0.12\\
0.0115158645360207	0.12\\
0.0115158645360207	0.13\\
0.0115284571417281	0.13\\
0.0115284571417281	0.14\\
0.0115320596079824	0.14\\
0.0115320596079824	0.15\\
0.011532289025468	0.15\\
0.011532289025468	0.16\\
0.0115323822356439	0.16\\
0.0115323822356439	0.17\\
0.0115343971787149	0.17\\
0.0115343971787149	0.18\\
0.0115390405386823	0.18\\
0.0115390405386823	0.19\\
0.0115397576941633	0.19\\
0.0115397576941633	0.2\\
0.0115437279780675	0.2\\
0.0115437279780675	0.21\\
0.0115451479322362	0.21\\
0.0115451479322362	0.22\\
0.0115467452674502	0.22\\
0.0115467452674502	0.23\\
0.0115504697392484	0.23\\
0.0115504697392484	0.24\\
0.011554614247589	0.24\\
0.011554614247589	0.25\\
0.0115605648272637	0.25\\
0.0115605648272637	0.26\\
0.0115608413002056	0.26\\
0.0115608413002056	0.27\\
0.0115695992934989	0.27\\
0.0115695992934989	0.28\\
0.0115720399277873	0.28\\
0.0115720399277873	0.29\\
0.0115847986887678	0.29\\
0.0115847986887678	0.3\\
0.011596188939324	0.3\\
0.011596188939324	0.31\\
0.011596505176267	0.31\\
0.011596505176267	0.32\\
0.0115984242345061	0.32\\
0.0115984242345061	0.33\\
0.011611387390968	0.33\\
0.011611387390968	0.34\\
0.0116124881372171	0.34\\
0.0116124881372171	0.35\\
0.0116124983565246	0.35\\
0.0116124983565246	0.36\\
0.0116132399600568	0.36\\
0.0116132399600568	0.37\\
0.0116147631575706	0.37\\
0.0116147631575706	0.38\\
0.0116163166921786	0.38\\
0.0116163166921786	0.39\\
0.0116330107365537	0.39\\
0.0116330107365537	0.4\\
0.0116343885854029	0.4\\
0.0116343885854029	0.41\\
0.0116392088349109	0.41\\
0.0116392088349109	0.42\\
0.0116397044311653	0.42\\
0.0116397044311653	0.43\\
0.0116432162704033	0.43\\
0.0116432162704033	0.44\\
0.0116463234276233	0.44\\
0.0116463234276233	0.45\\
0.0116480508669042	0.45\\
0.0116480508669042	0.46\\
0.0116502883248043	0.46\\
0.0116502883248043	0.47\\
0.0116509818972213	0.47\\
0.0116509818972213	0.48\\
0.0116534079163879	0.48\\
0.0116534079163879	0.49\\
0.0116534737955921	0.49\\
0.0116534737955921	0.5\\
0.0116554408661571	0.5\\
0.0116554408661571	0.51\\
0.0116565643720895	0.51\\
0.0116565643720895	0.52\\
0.0116579283341555	0.52\\
0.0116579283341555	0.53\\
0.0116615424865543	0.53\\
0.0116615424865543	0.54\\
0.011661667422526	0.54\\
0.011661667422526	0.55\\
0.0116622074177908	0.55\\
0.0116622074177908	0.56\\
0.0116655377768931	0.56\\
0.0116655377768931	0.57\\
0.011666342117178	0.57\\
0.011666342117178	0.58\\
0.0116687634534498	0.58\\
0.0116687634534498	0.59\\
0.011669424992898	0.59\\
0.011669424992898	0.6\\
0.011670484033401	0.6\\
0.011670484033401	0.61\\
0.0116805289364397	0.61\\
0.0116805289364397	0.62\\
0.0116813990658342	0.62\\
0.0116813990658342	0.63\\
0.0116930054601419	0.63\\
0.0116930054601419	0.64\\
0.0116943092269696	0.64\\
0.0116943092269696	0.65\\
0.0116949143592146	0.65\\
0.0116949143592146	0.66\\
0.0116969284889237	0.66\\
0.0116969284889237	0.67\\
0.0116974892938153	0.67\\
0.0116974892938153	0.68\\
0.0116979352466769	0.68\\
0.0116979352466769	0.69\\
0.011700769742817	0.69\\
0.011700769742817	0.7\\
0.0117061627529055	0.7\\
0.0117061627529055	0.71\\
0.0117108183248287	0.71\\
0.0117108183248287	0.72\\
0.0117188481395943	0.72\\
0.0117188481395943	0.73\\
0.0117193032304049	0.73\\
0.0117193032304049	0.74\\
0.0117204201113028	0.74\\
0.0117204201113028	0.75\\
0.0117226110353513	0.75\\
0.0117226110353513	0.76\\
0.0117344104401869	0.76\\
0.0117344104401869	0.77\\
0.0117378654256708	0.77\\
0.0117378654256708	0.78\\
0.0117428400586526	0.78\\
0.0117428400586526	0.79\\
0.0117484276681755	0.79\\
0.0117484276681755	0.8\\
0.0117505449905863	0.8\\
0.0117505449905863	0.81\\
0.0117532730990487	0.81\\
0.0117532730990487	0.82\\
0.0117594183900411	0.82\\
0.0117594183900411	0.83\\
0.0117605575028245	0.83\\
0.0117605575028245	0.84\\
0.0117609392689798	0.84\\
0.0117609392689798	0.85\\
0.0117677819744805	0.85\\
0.0117677819744805	0.86\\
0.0117698443449756	0.86\\
0.0117698443449756	0.87\\
0.0117728899387885	0.87\\
0.0117728899387885	0.88\\
0.0117748019857803	0.88\\
0.0117748019857803	0.89\\
0.0117782883435424	0.89\\
0.0117782883435424	0.9\\
0.0117798557283509	0.9\\
0.0117798557283509	0.91\\
0.0117987123817076	0.91\\
0.0117987123817076	0.92\\
0.0117991142299193	0.92\\
0.0117991142299193	0.93\\
0.0118037489087862	0.93\\
0.0118037489087862	0.94\\
0.0118057543291988	0.94\\
0.0118057543291988	0.95\\
0.0118102338124128	0.95\\
0.0118102338124128	0.96\\
0.0118107502652776	0.96\\
0.0118107502652776	0.97\\
0.0118458088497506	0.97\\
0.0118458088497506	0.98\\
0.011858807206157	0.98\\
0.011858807206157	0.99\\
0.0119122940467647	0.99\\
0.0119122940467647	1\\
inf	1\\
};
\addplot [color=black,solid,line width=1.0pt,forget plot]
  table[row sep=crcr]{%
0.0116455835745651	0\\
0.0116455835745651	1\\
};
\addplot [color=red,dotted,line width=2.0pt,forget plot]
  table[row sep=crcr]{%
0.0119649135188415	0\\
0.0119649135188415	1\\
};
\end{axis}
\end{tikzpicture}%
		}
		\caption{30\% prediction error}
		\label{fig:cdf_30}
	\end{subfigure} ~
	\begin{subfigure}[b]{0.2\textwidth}
		\resizebox{\columnwidth}{!}{
%
%
\begin{tikzpicture}

\begin{axis}[%
width=\textwidth,
height=0.85\textwidth,
unbounded coords=jump,
scale only axis,
xmin=0.0112,
xmax=0.0122,
xlabel={V},
xmajorgrids,
ymin=0,
ymax=1,
ylabel={Empirical CDF of V},
ymajorgrids
]
\addplot [color=blue,solid,line width=2.0pt,forget plot]
  table[row sep=crcr]{%
-inf	0\\
0.0115363594790196	0\\
0.0115363594790196	0.01\\
0.0115367861899823	0.01\\
0.0115367861899823	0.02\\
0.0115592745181267	0.02\\
0.0115592745181267	0.03\\
0.0115608989234607	0.03\\
0.0115608989234607	0.04\\
0.0115636677222696	0.04\\
0.0115636677222696	0.05\\
0.0115658872896832	0.05\\
0.0115658872896832	0.06\\
0.0115675920657202	0.06\\
0.0115675920657202	0.07\\
0.0115737233056744	0.07\\
0.0115737233056744	0.08\\
0.0115758482031767	0.08\\
0.0115758482031767	0.09\\
0.011580038781425	0.09\\
0.011580038781425	0.1\\
0.0115812244461478	0.1\\
0.0115812244461478	0.11\\
0.0115826079466653	0.11\\
0.0115826079466653	0.12\\
0.0115834340923207	0.12\\
0.0115834340923207	0.13\\
0.0115884801674049	0.13\\
0.0115884801674049	0.14\\
0.0115896179426683	0.14\\
0.0115896179426683	0.15\\
0.0115908735775694	0.15\\
0.0115908735775694	0.16\\
0.0115909909090904	0.16\\
0.0115909909090904	0.17\\
0.0115925473002221	0.17\\
0.0115925473002221	0.18\\
0.0115947147737343	0.18\\
0.0115947147737343	0.19\\
0.0115953713085712	0.19\\
0.0115953713085712	0.2\\
0.01159595168655	0.2\\
0.01159595168655	0.21\\
0.011596169342608	0.21\\
0.011596169342608	0.22\\
0.0115989809826196	0.22\\
0.0115989809826196	0.23\\
0.0116010379220581	0.23\\
0.0116010379220581	0.24\\
0.011601167876655	0.24\\
0.011601167876655	0.25\\
0.0116013152369427	0.25\\
0.0116013152369427	0.26\\
0.0116044912344033	0.26\\
0.0116044912344033	0.27\\
0.011604739210337	0.27\\
0.011604739210337	0.28\\
0.011610584498867	0.28\\
0.011610584498867	0.29\\
0.0116110854380088	0.29\\
0.0116110854380088	0.3\\
0.011612571299852	0.3\\
0.011612571299852	0.31\\
0.0116145902439102	0.31\\
0.0116145902439102	0.32\\
0.0116148893158644	0.32\\
0.0116148893158644	0.33\\
0.0116153859614723	0.33\\
0.0116153859614723	0.34\\
0.0116173875611161	0.34\\
0.0116173875611161	0.35\\
0.0116178419477124	0.35\\
0.0116178419477124	0.36\\
0.0116211620838106	0.36\\
0.0116211620838106	0.37\\
0.0116212240320073	0.37\\
0.0116212240320073	0.38\\
0.0116246964435015	0.38\\
0.0116246964435015	0.39\\
0.0116248036361145	0.39\\
0.0116248036361145	0.4\\
0.0116275449814457	0.4\\
0.0116275449814457	0.41\\
0.0116290217858165	0.41\\
0.0116290217858165	0.42\\
0.0116292361477082	0.42\\
0.0116292361477082	0.43\\
0.0116301508329371	0.43\\
0.0116301508329371	0.44\\
0.0116308277654962	0.44\\
0.0116308277654962	0.45\\
0.0116309615128208	0.45\\
0.0116309615128208	0.46\\
0.0116312890467372	0.46\\
0.0116312890467372	0.47\\
0.0116319314615789	0.47\\
0.0116319314615789	0.48\\
0.0116319968195855	0.48\\
0.0116319968195855	0.49\\
0.0116323477566743	0.49\\
0.0116323477566743	0.5\\
0.0116327339903946	0.5\\
0.0116327339903946	0.51\\
0.0116328783196827	0.51\\
0.0116328783196827	0.52\\
0.0116336943578435	0.52\\
0.0116336943578435	0.53\\
0.0116348306522755	0.53\\
0.0116348306522755	0.54\\
0.0116353804321023	0.54\\
0.0116353804321023	0.55\\
0.0116354940550484	0.55\\
0.0116354940550484	0.56\\
0.0116356329898115	0.56\\
0.0116356329898115	0.57\\
0.0116366547443206	0.57\\
0.0116366547443206	0.58\\
0.0116375521314752	0.58\\
0.0116375521314752	0.59\\
0.0116391069855724	0.59\\
0.0116391069855724	0.6\\
0.0116393892871511	0.6\\
0.0116393892871511	0.61\\
0.0116409222072158	0.61\\
0.0116409222072158	0.62\\
0.0116435615482639	0.62\\
0.0116435615482639	0.63\\
0.0116446620321212	0.63\\
0.0116446620321212	0.64\\
0.0116454314854682	0.64\\
0.0116454314854682	0.65\\
0.0116466293988005	0.65\\
0.0116466293988005	0.66\\
0.0116481041835037	0.66\\
0.0116481041835037	0.67\\
0.0116482992814723	0.67\\
0.0116482992814723	0.68\\
0.0116508555645616	0.68\\
0.0116508555645616	0.69\\
0.0116509688077192	0.69\\
0.0116509688077192	0.7\\
0.011651305001819	0.7\\
0.011651305001819	0.71\\
0.0116513631246034	0.71\\
0.0116513631246034	0.72\\
0.0116526830355927	0.72\\
0.0116526830355927	0.73\\
0.0116557280501578	0.73\\
0.0116557280501578	0.74\\
0.0116567453518676	0.74\\
0.0116567453518676	0.75\\
0.0116568454260903	0.75\\
0.0116568454260903	0.76\\
0.0116575069796628	0.76\\
0.0116575069796628	0.77\\
0.0116586197477478	0.77\\
0.0116586197477478	0.78\\
0.0116586801104478	0.78\\
0.0116586801104478	0.79\\
0.0116597966760071	0.79\\
0.0116597966760071	0.8\\
0.0116649003831244	0.8\\
0.0116649003831244	0.81\\
0.0116652236524754	0.81\\
0.0116652236524754	0.82\\
0.0116662273197307	0.82\\
0.0116662273197307	0.83\\
0.0116668501416369	0.83\\
0.0116668501416369	0.84\\
0.0116673830395396	0.84\\
0.0116673830395396	0.85\\
0.0116693495728128	0.85\\
0.0116693495728128	0.86\\
0.0116695925930657	0.86\\
0.0116695925930657	0.87\\
0.0116697517916598	0.87\\
0.0116697517916598	0.88\\
0.0116727775197907	0.88\\
0.0116727775197907	0.89\\
0.0116728755909094	0.89\\
0.0116728755909094	0.9\\
0.0116737539125075	0.9\\
0.0116737539125075	0.91\\
0.0116741475594436	0.91\\
0.0116741475594436	0.92\\
0.0116757221789164	0.92\\
0.0116757221789164	0.93\\
0.0116771988916642	0.93\\
0.0116771988916642	0.94\\
0.0116794476753668	0.94\\
0.0116794476753668	0.95\\
0.0116806366847463	0.95\\
0.0116806366847463	0.96\\
0.0116837177275894	0.96\\
0.0116837177275894	0.97\\
0.0116943176622963	0.97\\
0.0116943176622963	0.98\\
0.011698721180159	0.98\\
0.011698721180159	0.99\\
0.011719058998002	0.99\\
0.011719058998002	1\\
inf	1\\
};
\addplot [color=black,solid,line width=1.0pt,forget plot]
  table[row sep=crcr]{%
0.0116293903304238	0\\
0.0116293903304238	1\\
};
\addplot [color=red,dotted,line width=2.0pt,forget plot]
  table[row sep=crcr]{%
0.011733388538984	0\\
0.011733388538984	1\\
};
\end{axis}
\end{tikzpicture}%
		}
	\caption{10\% prediction error}
		\label{fig:cdf_10}
	\end{subfigure}
\caption{\textit{The empirical cumulative distribution function of the load variance under Algorithm \ref{algorithm: uncertainty} over 24 hour control horizon using real data. The red line represents the analytic bound on the 90\% confidence interval computed from Theorem \ref{thm: tail-bound}, and the black line shows the empirical mean.}}
\label{fig:result}
\end{figure}

Given this setting, we simulate 100 instances in each scenario and compare the results with the Theorems \ref{thm: tail-bound}. The results are shown in Fig. \ref{fig:result} where we plot the cumulative distribution (CDF) of the load variance produced by Algorithm \ref{algorithm: uncertainty} under two different scenarios.  Specifically, in Fig. \ref{fig:cdf_30}, we assume the prediction error in wind power generation is $30\%$, and in Fig. \ref{fig:cdf_10}, we assume the prediction error is $10\%$. We plot the CDF on the same scale in both plots and additionally show an  analytic bound on the $90\%$ confidence interval computed from Theorem \ref{thm: tail-bound}.  For both cases, the results highlight a strong concentration around the mean, and the analytic bound from Theorem \ref{thm: tail-bound} is valid despite the fact that the $t$-valley-filling assumption is not satisfied.   Further, note that the analytic bound is much tighter when prediction error is small, which coincides the statement of Theorem \ref{thm: tail-bound}.

\section{Conclusion}
We have studied a promising algorithm for direct control demand response: model predictive deferrable load control.  In particular, we have, for the first time, provided a distributional analysis of the algorithm and shown that the load variance is tightly concentrated around its mean.  Thus, our results highlight that the typical performance one should expect to see with model predictive deferrable load control is not-too-different from the average-case analysis.  Importantly, the proof technique we develop may be useful for the analysis of model predictive control in more general settings as well.

The main limitation in our analysis (which is also true for the prior stochastic analysis of model predictive deferrable load control) is the assumption that a $t$-valley-filling solution exists. Practically, one can expect this to be satisfied if the penetration of deferrable loads is high; however, relaxing the need for this technical assumption remains an important challenge.  Interestingly, the numerical results we report here highlight that one should also expect a tight concentration in the case where a $t$-valley-filling solution does not exist.

\bibliographystyle{IEEEtran}
{
\bibliography{reference}
}

\appendix
\section{Proof of Proposition \ref{prop: worst-case}}
It has been computed in \cite{gan2013} that the load variance $V$ obtained by Algorithm \ref{algorithm: uncertainty} is composed of two parts:
	\[ V = V_1 + V_2, \]
where
	\begin{align*}
	V_1
	&~:=~ \frac{1}{T}\sum_{t=1}^T\left[\sum_{\tau=1}^t \frac{\tau-1}{T(T-\tau+1)}(a(\tau)-\lambda) \right. \\
	&\qquad\qquad\left.- \sum_{\tau=t+1}^T \frac{1}{T} (a(\tau)-\lambda) \right]^2
	\end{align*}
is the variance due to the prediction error on deferrable load and
	\begin{align*}
	V_2
	&~:=~ \frac{1}{T}\sum_{t=1}^T\left[\sum_{\tau=1}^t \frac{\tau-1}{T(T-\tau+1)}	 e(\tau)F(T-\tau) \right. \\
	&\qquad\qquad \left.- \sum_{\tau=t+1}^T \frac{1}{T} e(\tau)F(T-\tau)\right]^2
	\end{align*}
is the variance due to the prediction error on baseload. Now we compute the worst-case $V_1$ and $V_2$ under the bounded prediction error assumption \eqref{infinity norm}.

We start with computing the worst-case $V_1$. Let $x(\tau):=a(\tau)-\lambda$ for $\tau=1,2,\ldots,T$, then
	\begin{align*}
	V_1 &~=~ \frac{1}{T}\sum_{t=1}^T\left[\sum_{\tau=1}^t \frac{\tau-1}{T(T-\tau+1)}x(\tau) - \sum_{\tau=t+1}^T \frac{1}{T} x(\tau) \right]^2 \\
	&~=~ \frac{1}{T}\sum_{t=1}^T\left[\sum_{\tau=1}^t \frac{1}{T-\tau+1} x(\tau) - \sum_{\tau=1}^T \frac{1}{T} x(\tau) \right]^2 \\
	&~=~ \frac{1}{T}\sum_{t=1}^T \left[\sum_{\tau=1}^t \frac{1}{T-\tau+1} x(\tau)\right]^2
	+ \frac{1}{T}\sum_{t=1}^T \left[\sum_{\tau=1}^T \frac{1}{T} x(\tau)\right]^2 \\
	& \qquad\qquad - \frac{2}{T}\sum_{t=1}^T \sum_{\tau=1}^t \frac{1}{T-\tau+1} x(\tau) \sum_{s=1}^T \frac{1}{T} x(s) \\
	&~=~ \frac{1}{T}\sum_{t=1}^T \left[\sum_{\tau=1}^t \frac{1}{T-\tau+1} x(\tau)\right]^2
	+ \left[\sum_{\tau=1}^T \frac{1}{T} x(\tau)\right]^2 \\
	& \qquad\qquad - \frac{2}{T^2}\sum_{s=1}^T x(s)\sum_{\tau=1}^T \sum_{t=\tau}^T \frac{1}{T-\tau+1} x(\tau) \\
	&~=~ \frac{1}{T}\sum_{t=1}^T \left[\sum_{\tau=1}^t \frac{1}{T-\tau+1} x(\tau)\right]^2
	+ \frac{1}{T^2} \left[\sum_{\tau=1}^T x(\tau)\right]^2 \\
	& \qquad\qquad - \frac{2}{T^2}\sum_{s=1}^T x(s)\sum_{\tau=1}^T x(\tau) \\
	&~=~ \frac{1}{T}\sum_{t=1}^T \left[\sum_{\tau=1}^t \frac{1}{T-\tau+1} x(\tau)\right]^2
	- \frac{1}{T^2} \left[\sum_{\tau=1}^T x(\tau)\right]^2.
	\end{align*}
The first term
	\begin{align*}
	& \frac{1}{T}\sum_{t=1}^T \left[\sum_{\tau=1}^t \frac{1}{T-\tau+1} x(\tau)\right]^2 \\
	&~=~ \frac{1}{T}\sum_{t=1}^T \sum_{\tau=1}^t \left[\frac{1}{T-\tau+1} x(\tau)\right]^2 \\
	&\qquad + \frac{2}{T}\sum_{t=1}^T \sum_{\tau=1}^t \frac{1}{T-\tau+1} x(\tau) \sum_{s=\tau+1}^t \frac{1}{T-s+1} x(s) \\
	&~=~ \frac{1}{T}\sum_{\tau=1}^T \sum_{t=\tau}^T \frac{1}{(T-\tau+1)^2} x^2(\tau) \\
	&\qquad + \frac{2}{T} \sum_{\tau=1}^T \sum_{s=\tau+1}^T  \sum_{t=s}^T  \frac{1}{T-\tau+1}   \frac{1}{T-s+1} x(\tau) x(s)\\
	&~=~ \frac{1}{T}\sum_{\tau=1}^T \frac{1}{T-\tau+1} x^2(\tau) \\
	&\qquad + \frac{2}{T} \sum_{\tau=1}^T \sum_{s=\tau+1}^T \frac{1}{T-\tau+1}x(\tau)  x(s) \\
	&~=~ \frac{1}{T}\sum_{\tau=1}^T \sum_{s=1}^T \frac{1}{T-\tau \wedge s +1} x(\tau)x(s)
	\end{align*}
where $a\wedge b:=\min\{a,b\}$ for $a,b\in\mathbb{R}$. Let the matrix $A\in\mathbb{R}^{T \times T}$ be given by
	\[ A_{\tau s} := \frac{T}{T-\tau \wedge s + 1} \]
for $\tau,s=1,2,\ldots,T$, i.e.,
	\[ A = \begin{bmatrix}
	\frac{T}{T} & \frac{T}{T} & \frac{T}{T} & \cdots & \frac{T}{T} \\
	\frac{T}{T} & \frac{T}{T-1} & \frac{T}{T-1} & \cdots & \frac{T}{T-1} \\
	\frac{T}{T} & \frac{T}{T-1} & \frac{T}{T-2} & \cdots & \frac{T}{T-2} \\
	\vdots & \vdots & \vdots & \ddots & \vdots \\
	\frac{T}{T} & \frac{T}{T-1} & \frac{T}{T-2} & \cdots & \frac{T}{1},
	\end{bmatrix}\]
then
	\begin{align*}
	V_1 = \frac{1}{T^2} x^T \left( A - \one\one^T\right) x
	\end{align*}
where the vector $x:=(x(1),x(2),\ldots,x(T))^T$. When prediction error is bounded as in \eqref{infinity norm}, one has $|x(t)| \leq \epsilon_1$ for all $t$, and therefore
	\begin{align*}
	V_1 &~=~ \frac{1}{T^2} \sum_{\tau=1}^T\sum_{s=1}^T\left(A_{\tau s}-1\right)x(\tau)x(s) \\
	&~\leq~ \frac{1}{T^2} \sum_{\tau=1}^T\sum_{s=1}^T \frac{\tau\wedge s-1}{T-\tau\wedge s+1} \epsilon_1^2
	\end{align*}
and the equality is attained if and only if $x(t)=\epsilon_1$ for all $t$, or $x(t)=-\epsilon_1$ for all $t$. Finally, we simplify the worst-case expression of $V_1$ as follows:
	\begin{align*}
	\sup_a~ V_1 &~=~ \frac{1}{T^2} \sum_{\tau=1}^T\sum_{s=1}^T \frac{\tau\wedge s-1}{T-\tau\wedge s+1} \epsilon_1^2 \\
	&~=~ \frac{\epsilon_1^2}{T^2} \sum_{k=1}^T \frac{k-1}{T-k+1}(2T+1-2k) \\
	&~=~ \epsilon_1^2 \left(1- \frac{1}{T}\sum_{k=1}^T\frac{1}{k}\right)
	~\approx~ \epsilon_1^2 \left(1- \frac{\ln T}{T}\right).
	\end{align*}

We proceed to compute the worst-case $V_2$. Using the same derivation, it can be computed that
	\[ V_2 = \frac{1}{T^2} y^T \left( A - \one\one^T\right) y \]
where
	\begin{align*}
	y &~:=~ (y(1),y(2),\ldots,y(T))^T, \\
	y(t) &~:=~ e(t)F(T-t), \quad t=1,2,\ldots,T.
	\end{align*}
It follows that
	\begin{align*}
	V_2 &~=~ \frac{1}{T^2} \sum_{\tau=1}^T\sum_{s=1}^T\left(A_{\tau s}-1\right)y(\tau)y(s) \\
	&~\leq~ \frac{1}{T^2} \sum_{\tau=1}^T\sum_{s=1}^T \frac{\tau\wedge s-1}{T-\tau\wedge s+1} \epsilon_2^2|F(T-\tau)F(T-s)|
	\end{align*}
and that the equality is attained if and only if $e(t)=\epsilon_2\cdot\mathrm{sgn}(F(T-t))$ for all $t$, or $e(t)=-\epsilon_2\cdot\mathrm{sgn}(F(T-t))$ for all $t$. Finally, we simplify the worst-case expression of $V_2$ as follows:
	\begin{align*}
	\sup_e~ V_2 &~=~ \frac{1}{T^2} \sum_{\tau=1}^T\sum_{s=1}^T \frac{\tau\wedge s-1}{T-\tau\wedge s+1} \epsilon_2^2|F(T-\tau)F(T-s)| \\
	&~=~ \frac{\epsilon_2^2}{T^2} \sum_{\tau=0}^{T-1}\sum_{s=0}^{T-1} \left(\frac{T}{\tau\vee s+1}-1\right) |F(\tau)F(s)|
	\end{align*}

To summarize, the worst-case load variance $V$ obtained by Algorithm \ref{algorithm: uncertainty} is
	\begin{align*}
	\sup_{a,e}~ V &~=~ \epsilon_1^2 \left(1- \frac{1}{T}\sum_{k=1}^T\frac{1}{k}\right) \\
	& \qquad + \frac{\epsilon_2^2}{T^2} \sum_{\tau=0}^{T-1}\sum_{s=0}^{T-1} \left(\frac{T}{\tau\vee s+1}-1\right) |F(\tau)F(s)|.
	\end{align*}


\section{Proof of Theorem \ref{thm: tail-bound}}
\label{app: tail-bound}

The theorem relies on a variant of the Log-Sobolev inequality provided in the following lemma.
\begin{lemma}[Theorem 3.2, \cite{ledoux1999}]
Let $f:\mathbb{R}^n\mapsto\mathbb{R}$ be convex and $X$ be supported on $[-d/2, d/2]^n$, then
\begin{align}
&\quad \ex[\exp(f(X)) f(X) ] - \ex[\exp(f(X))] \log \ex[\exp(f(X))] \notag \\
&\le \frac{d^2}{2} \ex[\exp(f(X)) ||\nabla f(X)||^2].
\label{eqn: log-sobolev}
\end{align}
\label{lemma: log-sobolev}
\end{lemma}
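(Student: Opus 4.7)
The plan is to prove this modified log-Sobolev inequality for bounded product measures by the classical two-step strategy: tensorization of entropy followed by a sharp one-dimensional bound that leverages convexity of $f$ and the support diameter $d$. Throughout I assume, as is standard in this setting, that $X=(X_1,\dots,X_n)$ has independent coordinates each supported on $[-d/2,d/2]$.

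First I would invoke the subadditivity of entropy for product measures, applied to the nonnegative functional $e^{f(X)}$:
\begin{equation*}
\mathrm{Ent}\bigl[e^{f(X)}\bigr] \;\le\; \sum_{i=1}^n \mathbb{E}\!\left[\mathrm{Ent}_i\bigl[e^{f(X)}\bigr]\right],
\end{equation*}
where $\mathrm{Ent}_i$ is the entropy taken with respect to $X_i$ alone with the other coordinates frozen. Because $f$ is jointly convex, each slice $x_i\mapsto f(\ldots,x_i,\ldots)$ is a convex scalar function on an interval of length $d$. This reduces matters to a scalar statement: for any random variable $Y$ supported on an interval of length $d$ and any convex scalar $h$,
\begin{equation*}
\mathrm{Ent}\bigl[e^{h(Y)}\bigr] \;\le\; \tfrac{d^2}{2}\,\mathbb{E}\bigl[e^{h(Y)}\,h'(Y)^2\bigr].
\end{equation*}
Summing this estimate across coordinates and identifying $\sum_i (\partial_i f)^2=\|\nabla f\|^2$ recovers \eqref{eqn: log-sobolev}.

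The heart of the proof is the scalar inequality. My plan is to couple $Y$ with an independent copy $Y'$ and begin from the standard symmetrization
\begin{equation*}
\mathrm{Ent}\bigl[e^{h(Y)}\bigr] \;\le\; \tfrac{1}{2}\,\mathbb{E}\mathbb{E}'\!\left[(e^{h(Y)}-e^{h(Y')})(h(Y)-h(Y'))\right],
\end{equation*}
which follows from the variational representation of entropy via an independent-coupling Jensen step. I would then combine three ingredients: (i) the elementary estimate $(e^a-e^b)(a-b)\le \tfrac{1}{2}(a-b)^2(e^a+e^b)$, obtained from a short one-variable calculus argument on $\varphi(t)=(t-2)e^t+t+2$; (ii) the convexity/supporting-line bound $|h(Y)-h(Y')|\le |Y-Y'|\max(|h'(Y)|,|h'(Y')|)$; and (iii) the support constraint $|Y-Y'|\le d$. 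After symmetrizing in $Y\leftrightarrow Y'$, these should collapse to the target $\tfrac{d^2}{2}\mathbb{E}[e^{h(Y)} h'(Y)^2]$.

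The main obstacle is obtaining the sharp constant in the scalar step. A naive combination of ingredients (i)--(iii) produces a maximum of squared gradients multiplied by a sum of exponentials and yields unwanted cross-terms of the form $\mathbb{E}[h'(Y)^2]\,\mathbb{E}[e^{h(Y')}]$, overshooting the target. The remedy is to use convexity to tie the gradient and the exponential to the \emph{same} realization of $Y$: the supporting-line inequality $h(Y)-h(Y')\le h'(Y)(Y-Y')$ lets one bound $(h(Y)-h(Y'))^2\,e^{h(Y)}$ directly by $(Y-Y')^2 h'(Y)^2 e^{h(Y)}$ on the event $\{h(Y)\ge h(Y')\}$, and the symmetric bound handles the complementary event; integrating against the product law of $(Y,Y')$ and invoking the diameter bound then produces the claimed scalar inequality, with tensorization completing the proof.
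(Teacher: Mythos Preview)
The paper does not prove this lemma at all: it is quoted directly as Theorem~3.2 of Ledoux~\cite{ledoux1999} and then used as a black box in the proof of Lemma~\ref{lemma: self-bound}. So there is no ``paper's proof'' to compare against; your proposal is the only argument on the table.

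Your strategy---subadditivity of entropy to reduce to a one-dimensional statement, then a symmetrization/independent-copy argument exploiting convexity of the slice and the diameter bound---is exactly the standard route to modified log-Sobolev inequalities of this type and is essentially how Ledoux proves it. The implicit independence assumption you flag is indeed what is meant, and the reduction via tensorization is correct. The covariance identity you start the scalar step from, $\mathrm{Ent}[e^{h}] \le \tfrac{1}{2}\mathbb{E}[(e^{h(Y)}-e^{h(Y')})(h(Y)-h(Y'))]$, is valid (it is just Jensen applied to $\log\mathbb{E}[e^h]\ge\mathbb{E}[h]$), and your event-splitting fix correctly ties the gradient and the exponential weight to the same realization.

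One caution on the constant: carrying your sketch through verbatim yields $d^2$ rather than $d^2/2$. On $\{h(Y)\ge h(Y')\}$ you get $(e^{h(Y)}-e^{h(Y')})(h(Y)-h(Y'))\le d^2 h'(Y)^2 e^{h(Y)}$; after using the symmetry of the integrand to collapse the $\tfrac12\mathbb{E}[\cdot]$ onto this single event, the $\tfrac12$ is consumed and you are left with $d^2\,\mathbb{E}[h'(Y)^2 e^{h(Y)}]$. Recovering the sharper $d^2/2$ quoted in the lemma requires a slightly finer one-dimensional argument than the naive coupling; if the exact constant matters to you (it only changes the $16$ in Theorem~\ref{thm: tail-bound} to $32$), consult Ledoux's original proof for the refinement.
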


If $f$ is further ``self-bounded'', then its tail probability can be bounded as in the following lemma.
\begin{lemma}
Let $f:\mathbb{R}^n\mapsto\mathbb{R}$ be convex and $X$ be supported on $[-d/2,d/2]^n$. If $\ex [f(X)]=0$ and $f$ satisfies the following self-bounding property
	\begin{equation}
	\label{self_bound} ||\nabla f||^2 \le af + b,
	\end{equation}
then the tail probability of $f(X)$ can be  bound as
\begin{equation}
\mathbb{P}\left\{f(X) >t\right\} \le \exp\left(\frac{-t^2}{2b + at}\right).
\end{equation}
\label{lemma: self-bound}
\end{lemma}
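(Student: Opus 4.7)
My plan is to execute the classical Herbst argument: combine the log-Sobolev inequality of Lemma 1 with the self-bounding property to obtain a differential inequality for the log moment generating function $K(\lambda) := \log \mathbb{E}[e^{\lambda f(X)}]$, solve it to obtain an explicit bound on $K$, then apply the Chernoff bound and optimise over $\lambda$.

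First, since $f$ is convex and $\lambda \ge 0$, the function $\lambda f$ is also convex, so Lemma 1 applied to $\lambda f$ gives
\[
\lambda\, \mathbb{E}[f e^{\lambda f}] - \mathbb{E}[e^{\lambda f}] \log \mathbb{E}[e^{\lambda f}] \;\le\; \tfrac{d^2 \lambda^2}{2}\, \mathbb{E}[\|\nabla f\|^2 e^{\lambda f}].
\]
Substituting $\|\nabla f\|^2 \le a f + b$ on the right hand side, writing $H(\lambda) = \mathbb{E}[e^{\lambda f}]$ and $K(\lambda) = \log H(\lambda)$, and dividing through by $H(\lambda)$ yields the scalar inequality
\[
\bigl(\lambda - \tfrac{d^2 a \lambda^2}{2}\bigr) K'(\lambda) - K(\lambda) \;\le\; \tfrac{d^2 b \lambda^2}{2}.
\]

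Second, I would introduce $\phi(\lambda) := K(\lambda)/\lambda$, which extends continuously to $\phi(0) = K'(0) = \mathbb{E}[f(X)] = 0$ by hypothesis. The inequality above simplifies to $\bigl(1 - \tfrac{d^2 a \lambda}{2}\bigr) \phi'(\lambda) - \tfrac{d^2 a}{2} \phi(\lambda) \le \tfrac{d^2 b}{2}$, and the integrating factor $\mu(\lambda) = 1 - \tfrac{d^2 a \lambda}{2}$ (positive for $\lambda < 2/(d^2 a)$) turns the left hand side into $(\mu \phi)'(\lambda)$. Integrating from $0$ to $\lambda$, using $\phi(0) = 0$, produces $\phi(\lambda) \le \tfrac{d^2 b \lambda}{2 - d^2 a \lambda}$, so that
\[
K(\lambda) \;\le\; \frac{d^2 b\, \lambda^2}{2 - d^2 a \lambda}, \qquad 0 < \lambda < \tfrac{2}{d^2 a}.
\]

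Finally, the Chernoff bound $\mathbb{P}(f(X) > t) \le \exp(K(\lambda) - \lambda t)$ combined with the above MGF bound leaves an exponent to be minimised in $\lambda$. The choice $\lambda^\star = 2t/(d^2(2b + at))$ (which lies in the admissible range) substitutes to give exactly $-t^2/(2b + at)$, yielding the stated Bernstein-type tail bound; the constants in the lemma correspond to absorbing the $d^2$ factor from Lemma 1 into $a$ and $b$.

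The main obstacle I expect is the passage to the ODE in $\phi$: one must justify $\phi(0) = 0$ rigorously via $K(0) = 0$ and $K'(0) = \mathbb{E}[f] = 0$ (a Taylor expansion or L'H\^opital argument), and verify that the differential inequality behaves well as $\lambda \to 0^+$ so that the integration from zero is legitimate. The remaining steps — applying Lemma 1, inserting the self-bounding hypothesis, running the integrating factor, and optimising the Chernoff exponent — are standard mechanical calculations.
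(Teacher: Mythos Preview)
Your proposal is correct and follows essentially the same Herbst-type argument as the paper: apply the log-Sobolev inequality (Lemma~1) to $\lambda f$, insert the self-bounding hypothesis, reduce to a first-order differential inequality for $\ln m(\lambda)$, integrate from $0$ using $\mathbb{E}f=0$, and optimise the Chernoff exponent with $\lambda^\star=2t/(d^2(2b+at))$. The only cosmetic difference is packaging: you introduce $\phi(\lambda)=K(\lambda)/\lambda$ and an integrating factor $\mu(\lambda)=1-\tfrac{d^2a\lambda}{2}$, whereas the paper recognises directly that dividing by $\lambda^2 m(\lambda)$ yields $\tfrac{d}{d\lambda}\bigl[(\tfrac{1}{\lambda}-\tfrac{ad^2}{2})\ln m(\lambda)\bigr]\le \tfrac{bd^2}{2}$; these are the same quantity, since $\mu\phi=(\tfrac{1}{\lambda}-\tfrac{ad^2}{2})\ln m(\lambda)$.

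Your remark about the $d^2$ factor is also on target: the paper's own proof in fact concludes with $\exp\bigl(-t^2/(d^2(2b+at))\bigr)$, not the $d^2$-free bound in the lemma statement, and it is the version with $d^2$ that is used downstream in the proof of Theorem~1 (where $d=2\epsilon$ produces the $16\epsilon^2$ in the final bound).
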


\begin{proof}
Denote the moment generating function of $f(X)$ by 
	\[ m(\theta) := \ex e^{\theta f(X)}, \qquad \theta >0. \]
The function $\theta f:\mathbb{R}^n\mapsto \mathbb{R}$ is convex, and therefore it follows from Lemma \ref{lemma: log-sobolev} that
	\begin{align*}
	& \ex\left[e^{\theta f}\theta f\right] - \ex\left[e^{\theta f}\right] \ln\ex\left[e^{\theta f}\right] \leq \frac{d^2}{2} \ex\left[e^{\theta f} ||\theta\nabla f||^2\right], \\
	& \theta m'(\theta) - m(\theta) \ln m(\theta) \leq \frac{1}{2}\theta^2d^2 \ex[e^{\theta f} ||\nabla f||^2].
	\end{align*}
According to the self-bounding property \eqref{self_bound}, one has
	\begin{align*}
	\theta m'(\theta) - m(\theta) \ln m(\theta)
	&~\leq~ \frac{1}{2}\theta^2d^2 \ex[e^{\theta f} (af+b)] \\
	&~=~ \frac{1}{2}\theta^2d^2 \left[ am'(\theta) + bm(\theta) \right].
	\end{align*}
Divide both sides by $\theta^2 m(\theta)$ to get
\[
\frac{d}{d\theta}\left[\left(\frac{1}{\theta} - \frac{ad^2}{2}\right)\ln m(\theta)\right] \le \frac{bd^2}{2}.
\]
Integrate both sides from 0 to $s$ to get
\[
\left.\left(\frac{1}{\theta} - \frac{ad^2}{2}\right)\ln m(\theta)~\right|_{\theta=0}^s \le \frac{1}{2} bd^2s
\]
for $s\geq0$. Noting that $m(0)=1$ and $m'(0)=\ex f=0$, one has
	\begin{align*}
	\lim_{\theta\rightarrow0^+} \left(\frac{1}{\theta} - \frac{ad^2}{2}\right)\ln m(\theta)
	~=~ 0,
	\end{align*}
and therefore
\begin{equation}
\label{moment generating function}
\left(\frac{1}{s} - \frac{ad^2}{2}\right)\ln m(s) \le \frac{1}{2} bd^2s
\end{equation}
for $s\geq0$. We can bound the tail probability $\prob\{f>t\}$ with the control \eqref{moment generating function} over the moment generating function $m(s)$.

In particular, one has
	\begin{align*}
	\prob\{f > t\}
	&~=~ \prob\left\{ e^{sf} > e^{st} \right\} ~\leq~ e^{-st} \ex \left[ e^{sf} \right] \\
	&~=~ \exp[-st + \ln m(s)] \\
	&~\leq~ \exp\left[-st + \frac{bd^2s^2}{2-asd^2}\right]
	\end{align*}
for $s\geq0$. Choose $s = t/(bd^2 + ad^2 t/2)$ to get
\[
\mathbb{P}\{f >t\} \le \exp\left(\frac{-t^2}{d^2(2b + at)}\right) .
\]
\end{proof}

\begin{proof}[Proof of Theorem \ref{thm: tail-bound}]

It has been computed in \cite{gan2013} that the load variance $V$ obtained by Algorithm \ref{algorithm: uncertainty} is composed of two parts:
	\[ V = V_1 + V_2 \]
where
	\begin{align*}
	V_1
	&~:=~ \frac{1}{T}\sum_{t=1}^T\left[\sum_{\tau=1}^t \frac{\tau-1}{T(T-\tau+1)}(a(\tau)-\lambda) \right. \\
	&\qquad\qquad\left.- \sum_{\tau=t+1}^T \frac{1}{T} (a(\tau)-\lambda) \right]^2
	\end{align*}
is the variance due to the prediction error on deferrable load and
	\begin{align*}
	V_2
	&~:=~ \frac{1}{T}\sum_{t=1}^T\left[\sum_{\tau=1}^t \frac{\tau-1}{T(T-\tau+1)}	 e(\tau)F(T-\tau) \right. \\
	&\qquad\qquad \left.- \sum_{\tau=t+1}^T \frac{1}{T} e(\tau)F(T-\tau)\right]^2
	\end{align*}
is the variance due to the prediction error on baseload.

Let $x(\tau) := a(\tau) - \lambda$ for $\tau=1,2,\ldots,T$, then
	\begin{align*}
	V_1 &~=~ \frac{1}{T}\sum_{t=1}^T\left[\sum_{\tau=1}^t \frac{\tau-1}{T(T-\tau+1)}x(\tau) - \sum_{\tau=t+1}^T \frac{1}{T} x(\tau)\right]^2 \\
	&~=~ \frac{1}{T}||Bx||_2^2
	\end{align*}
where the $T\times T$ matrix $B$ is given by
\[B_{t \tau} := \begin{cases}
\frac{\tau-1}{T(T-\tau + 1)} & \tau \le t \\
-\frac{1}{T} & \tau >t
\end{cases}, \qquad 1\leq t,\tau\leq T.\]
Similarly, the variance $V_2$ due to the prediction error on baseload can be written as
\[V_2 = g(e) = \frac{1}{T}||Ce||_2^2 \]
where the $T\times T$ matrix $C$ is given by
\[ C_{t\tau} := \begin{cases}
\frac{\tau-1}{T(T-\tau+1)}F(T-\tau), & \tau \le t \\
-\frac{1}{T}F(T-\tau), & \tau>t
\end{cases}\]
for $1\leq t,\tau\leq T$. Therefore, the load variance
	\[ V = V_1 + V_2 = \frac{1}{T} \|Ay\|_2^2 \]
where
	\[
	A = \begin{bmatrix}
	B & 0 \\
	0 & C
	\end{bmatrix}, \quad 
	y = \begin{bmatrix}
	x \\ e
	\end{bmatrix}. \]
	
Define a centered random variable
	\[ Z := h(y) := V - \ex V  = \frac{1}{T}||Ay||^2 - \ex V \]
and note that the function $h$ is convex. Let $\lambda_{\max}$ be the maximum eigenvalue of $A A^T/T$, then
\begin{align*}
||\nabla h(y)||^2
&~=~ \frac{4}{T^2}||A^T Ay||^2 ~=~ \frac{4}{T} (Ay)^T\left(\frac{A A^T}{T}\right)(Ay) \\
&~\leq~ \frac{4\lambda_{\max}}{T} (Ay)^T(Ay) ~=~ 4 \lambda_{\max}[h(y) + \ex V].
\end{align*}
According to the bounded prediction error assumption \eqref{infinity norm}, one has $|y|\leq\epsilon$ componentwise. Then, apply Lemma \ref{lemma: self-bound} to the random variable $Z$ to obtain
	\[\mathbb{P}\{Z >t\} \le \exp\left(-\frac{t^2}{16\lambda_{\max} \epsilon^2(2\ex V + t)}\right) \]
for $t>0$, i.e., 
	\[\mathbb{P}\{V-\ex V >t\} \le \exp\left(-\frac{t^2}{16\lambda_{\max} \epsilon^2(2\ex V + t)}\right) \]
for $t>0$. Finally, let $A' = A A^T/T$, $B' = B B^T /T$, and $C' = C C^T/T$, the largest eigenvalue $\lambda_{\max}$ of $A A^T/T $ can be bounded above as
\begin{align*}
\lambda_{\max} &= \max_{y} \frac{y^T A' y}{y^T y} = \max_{x,e}\frac{x^TB'x + e^T C' e}{x^T x + e^T e} \\
&\le \max_{x,e} \frac{ \lambda^{B'}_{\max} x^T x + \lambda^{C'}_{\max} e^T e}{x^Tx + e^Te} \\
&\le \max_{x,e} \frac{ \max(\lambda^{B'}_{\max},  \lambda^{C'}_{\max}) (x^Tx + e^T e)}{x^Tx + e^Te} \\ 
&=  \max(\lambda^{B'}_{\max},  \lambda^{C'}_{\max}) \\
&\le \max\left(\mbox{tr}\left(\frac{B B^T}{T}\right) , \mbox{tr}\left(\frac{C C^T}{T}\right)\right) \\
&= \max\left( \frac{\ln T}{T} , \frac{1}{T^2}\sum_{t=0}^{T-1}F^2(t)\frac{T-t-1}{t+1}  \right) ~= :~ \lambda_1.
\end{align*}

The last equality is because
\begin{align*}
& \qquad \mbox{tr}\left(BB^T\right) = \frac{1}{T} \sum_{i=1}^T (B B^T)_{ii} 
 = \sum_{i=1}^T \sum_{k=1}^T (B_{ki})^2 \\
&=\frac{1}{T^2} \sum_{i=1}^T \left(\sum_{k=1}^i \frac{(k-1)^2}{(T-k+1)^2}+ (T-i) \right) \\
&=\frac{1}{T^2} \sum_{k=1}^T ( \frac{(k-1)^2}{(T-k+1)} + \sum_{i=1}^T (T-i)\\
&=\frac{1}{T^2}\sum_{k=1}^T \frac{(T-k)^2}{k} + \sum_{k=1}^T \frac{(T-k)k}{k} \\
&=\sum_{k=2}^T\frac{1}{k} \le \ln T,
\end{align*}
and 
\begin{align*}
& \mathrm{tr}(C C^T) 
= \sum_{i=1}^T \left(\sum_{k=1}^T C_{ki}^2\right) \\
 = &\frac{1}{T^2}\sum_{i=1}^T \left(\sum_{k=1}^i \frac{(k-1)^2}{(T-k+1)^2}F^2(T-k) + \sum^T_{k=i+1}F^2(T-k)\right) \\
 = &\frac{1}{T^2}\left(\sum_{k=2}^T\frac{(k-1)^2}{T-k+1}F^2(T-k) + \sum^T_{k=2}(k-1)F^2(T-k)\right) \\
 = &\frac{1}{T} \sum^T_{k=2} F^2(T-k) \frac{k-1}{T-k+1}.   \qquad \qedhere
\end{align*}. \qedhere 
\end{proof}

\section{Proof of Theorem \ref{thm: variance} }

The derivation of the theorem is based on the following two lemma, which separates the cases when there is only one type of prediction error.

\begin{lemma}
If there is no prediction error in the base load, then the variance of the performance of Algorithm \ref{algorithm: uncertainty} is bounded by
\begin{equation}
\mathrm{var}(V) \le \left(4 \epsilon_1 s \frac{\ln T}{T}\right)^2.
\end{equation}
\label{lemma: var-deferrable-only}
\end{lemma}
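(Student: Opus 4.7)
The plan is to reuse the matrix decomposition already developed in the proof of Theorem \ref{thm: tail-bound}. When the baseload prediction is exact, we have $e \equiv 0$ and hence $V_2 = 0$, so the load variance reduces to $V = V_1 = \frac{1}{T}\|Bx\|_2^2 = x^T M x$ where $M := B^T B / T$ is symmetric positive semidefinite and $x = (a(1)-\lambda,\ldots,a(T)-\lambda)^T$ has independent coordinates with mean $0$, variance $s^2$, and $|x_i|\leq \epsilon_1$. Crucially, $s^2 \leq \epsilon_1^2$ because the variance of a mean-zero random variable bounded by $\epsilon_1$ in absolute value is at most $\epsilon_1^2$.

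I would then compute $\mathrm{var}(V)$ directly, exploiting independence. Writing
\[ V - \ex V \;=\; \sum_{i} M_{ii}(x_i^2 - s^2) \;+\; \sum_{i\neq j} M_{ij}\,x_i x_j, \]
a short check using $\ex x_i = 0$ shows the two sums are uncorrelated (every cross-expectation $\ex[(x_i^2-s^2)x_k x_l]$ with $k\neq l$ vanishes by considering the cases $i\notin\{k,l\}$ and $i\in\{k,l\}$). Independence then gives
\[ \mathrm{var}(V) \;=\; \sum_i M_{ii}^2 \,\mathrm{var}(x_i^2) \;+\; 2 s^4 \sum_{i\neq j} M_{ij}^2, \]
where the off-diagonal count of $2$ arises from the two fourth-moment pairings $i=k,j=l$ and $i=l,j=k$ together with the symmetry of $M$. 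Using $\mathrm{var}(x_i^2) \le \ex x_i^4 \leq \epsilon_1^2 s^2$ and $s^4 \leq \epsilon_1^2 s^2$, I would consolidate both terms into $3 \epsilon_1^2 s^2 \|M\|_F^2$.

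The final step is to bound $\|M\|_F^2$. Since $M$ is PSD, its eigenvalues $\lambda_i \ge 0$ satisfy $\sum_i \lambda_i^2 \le (\sum_i \lambda_i)^2$, so $\|M\|_F^2 \le (\mathrm{tr}\,M)^2$. The trace computation already done in the proof of Theorem \ref{thm: tail-bound} gives $\mathrm{tr}(M) = \mathrm{tr}(BB^T)/T \leq \ln T / T$, yielding
\[ \mathrm{var}(V) \;\le\; 3 \epsilon_1^2 s^2 \left(\frac{\ln T}{T}\right)^2 \;\le\; \left(\frac{4\epsilon_1 s \ln T}{T}\right)^2. \]

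I do not anticipate a genuine obstacle: the entire argument is mechanical once the quadratic-form representation is in hand. The only points requiring care are (i) the vanishing of the cross-covariance between diagonal and off-diagonal terms, which must be verified case-by-case using $\ex x_i = 0$, and (ii) absorbing $s^4$ into $\epsilon_1^2 s^2$ so that the final constant is clean. The inequality $\|M\|_F^2 \leq (\mathrm{tr}\,M)^2$ is loose in general but is exactly tight enough here, since the companion bound on $\mathrm{tr}(M)$ is already what appears in the target expression.
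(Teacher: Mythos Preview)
Your argument is correct, but it proceeds along a genuinely different route from the paper. The paper invokes the convex Poincar\'e inequality $\mathrm{var}(f(x)) \le 4\epsilon_1^2\,\ex\|\nabla f(x)\|^2$ for convex $f$ and bounded independent coordinates, then bounds $\ex\|\nabla f(x)\|^2 = \tfrac{4}{T^2}\ex\|B^TBx\|^2$ by $4\,\mathrm{tr}(M)\,\ex V_1 = 4s^2(\mathrm{tr}\,M)^2$ via $\lambda_{\max}(B^TB)\le \mathrm{tr}(B^TB)$. Your approach bypasses Poincar\'e entirely: you expand $\mathrm{var}(x^TMx)$ directly from fourth moments, separate diagonal from off-diagonal contributions, and close with the same PSD inequality $\|M\|_F^2\le(\mathrm{tr}\,M)^2$. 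The trade-off is that the paper's method is more conceptual and slots into the same concentration framework used for Theorem~\ref{thm: tail-bound}, whereas yours is more elementary (no inequality beyond Cauchy--Schwarz-type bounds on moments) and in fact delivers a sharper constant---your bookkeeping actually gives $2\epsilon_1^2 s^2\|M\|_F^2$ rather than $3$, since $\sum_i M_{ii}^2 + 2\sum_{i\neq j}M_{ij}^2 \le 2\|M\|_F^2$, comfortably inside the target $16$.
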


\begin{lemma}
If there is no prediction error in the deferrable load, then the variance of the performance of Algorithm \ref{algorithm: uncertainty} is bounded by
\begin{equation}
\mathrm{var}(V) \le \left(4 \epsilon_2 \sigma \frac{1}{T^2} \sum_{t=0}^{T-1} F^2(t) \frac{T-t+1}{t+1}\right)^2.
\end{equation}
\label{lemma: var-base-only}
\end{lemma}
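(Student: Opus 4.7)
The plan is to leverage the quadratic-form structure of $V$ established in the proof of Theorem \ref{thm: tail-bound}. When the prediction error on the deferrable load vanishes ($x \equiv 0$), the first component $V_1$ also vanishes, leaving $V = V_2 = \frac{1}{T}\|Ce\|_2^2 = e^T M e$, where $M := C^T C / T$ is symmetric and positive semidefinite, and the entries of $e = (e(1), \ldots, e(T))^T$ are independent with mean $0$, variance $\sigma^2$, and bounded magnitude $|e(t)| \le \epsilon_2$.

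The next step is to apply the standard variance identity for a quadratic form in independent centered random variables,
\[ \var(e^T M e) = (\mu_4 - 3\sigma^4)\sum_{i=1}^T M_{ii}^2 + 2\sigma^4 \, \mathrm{tr}(M^2), \]
where $\mu_4 := \ex(e(t)^4)$. The bounded-deviation hypothesis yields the moment bounds $\mu_4 \le \epsilon_2^2 \sigma^2$ and $\sigma^2 \le \epsilon_2^2$, so both terms on the right can be absorbed into a common multiple of $\epsilon_2^2 \sigma^2$ times a purely matrix-dependent quantity.

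The key estimate exploits the positive semidefiniteness of $M$: each $2 \times 2$ principal submatrix has non-negative determinant, which gives $M_{ij}^2 \le M_{ii} M_{jj}$, and summing produces $\mathrm{tr}(M^2) = \sum_{i,j} M_{ij}^2 \le (\mathrm{tr}\, M)^2$; similarly $\sum_i M_{ii}^2 \le (\mathrm{tr}\, M)^2$ since each $M_{ii} \ge 0$. Combining these inequalities with the bounds on $\mu_4$ and $\sigma^2$ yields $\var(V_2) \le 16\,\epsilon_2^2 \sigma^2 (\mathrm{tr}\, M)^2$, with constants chosen loosely. The trace $\mathrm{tr}\, M = \mathrm{tr}(C^T C)/T$ was already computed at the end of the proof of Theorem \ref{thm: tail-bound}; upper bounding that expression by the slightly larger $\frac{1}{T^2}\sum_{t=0}^{T-1} F^2(t)\frac{T-t+1}{t+1}$ (which dominates the exact value term by term) and substituting delivers the advertised bound.

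The main obstacle is making the two contributions in the variance identity collapse simultaneously into a clean trace-squared form. Without the PSD property of $C^T C$, the off-diagonal sum $\sum_{i,j} M_{ij}^2 = \|M\|_F^2$ could only be bounded by $(\mathrm{tr}\, M)\cdot \|M\|_{\mathrm{op}}$, which is strictly weaker than $(\mathrm{tr}\, M)^2$ in general and would not match the theorem statement. It is precisely the PSD structure inherited from $M = C^T C/T$ that makes the Frobenius and trace norms commensurate here and produces the compact bound.
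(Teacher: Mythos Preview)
Your argument is correct, but the route differs from the paper's. The paper does not expand the fourth moments directly; instead it invokes the convex Poincar\'e inequality, $\var(V)\le 4\epsilon_2^2\,\ex\|\nabla g(e)\|^2$, for the convex function $g(e)=\frac{1}{T}\|Ce\|_2^2$, and then bounds $\ex\|\nabla g(e)\|^2=\frac{4}{T^2}\ex\|C^TCe\|^2$ by $\frac{4}{T}\lambda_{\max}(C^TC/T)\,\ex\|Ce\|^2$, replacing $\lambda_{\max}$ by the trace and $\ex\|Ce\|^2/T$ by $\sigma^2\,\mathrm{tr}(C^TC/T)$. Your approach is more elementary in that it avoids Poincar\'e entirely, relying only on the exact variance identity $\var(e^TMe)=(\mu_4-3\sigma^4)\sum_iM_{ii}^2+2\sigma^4\,\mathrm{tr}(M^2)$ and the PSD fact $\mathrm{tr}(M^2)\le(\mathrm{tr}\,M)^2$; it also yields a smaller implicit constant (your bound is actually $3\epsilon_2^2\sigma^2(\mathrm{tr}\,M)^2$ before loosening to $16$). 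The paper's Poincar\'e route, on the other hand, dovetails with the Log-Sobolev machinery used for Theorem~\ref{thm: tail-bound} and would extend to convex but non-quadratic functionals of the noise, whereas your quadratic-form identity is specific to the exact form $e^TMe$.
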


Firstly we will prove Lemma \ref{lemma: var-deferrable-only}, where we only consider prediction error in deferrable load.

\begin{proof}[Proof of Lemma \ref{lemma: var-deferrable-only}]

Let $x(\tau) = a(\tau) - \lambda$, then $x(\tau)$ is centered, with variance $s^2$. Let $x = (x(1), \ldots, x(T))$. From the results in \cite{gan2013} Lemma 1, we have
\[V_1 = \frac{1}{T}\sum_{t=1}^T\left(\sum_{\tau=1}^t \frac{\tau-1}{T(T-\tau+1)}x(\tau) - \sum_{\tau=t+1}^T \frac{1}{T} x(\tau)\right)^2\]
Define an auxilary matrix $B$ such that
\[B_{t \tau} : = \begin{cases}
\frac{\tau-1}{T(T-\tau + 1)} & \tau \le t \\
-\frac{1}{T} & \tau >t.
\end{cases}\]
Then we have
\[V_1 = f(x(1), x(2), \ldots, x(T)) = \frac{1}{T}||Bx||_2^2. \]
Hence $V_1 = f(x)$ is a convex function, by convex Poincar\'{e} inequality, we have
\begin{equation}
\mbox{var}(V_1) \le 4 \epsilon_1^2\ex[||\nabla f(x)||^2].
\label{eqn: poincare}
\end{equation}
Whereas
\begin{align*}
\ex \left[||\nabla f(x) ||^2 \right] &= \frac{4}{T^2} \ex \left[ ||B^T B x||^2 \right] \\
&\le \frac{4}{T^2} \lambda_{\max} (B^T B) \ex\left[||Bx||^2 \right] \\
&\le 4\mbox{tr}\left(\frac{1}{T} B^T B \right) \ex \left[\frac{1}{T}||Bx||^2\right] \\
&= 4 s^2 \left [ \mbox{tr}\left(\frac{1}{T} B^T B \right) \right]^2 \\
&\le 4 s^2 \left(\frac{\ln T}{T}\right)^2
\end{align*}

\end{proof}

Next we proof lemma \ref{lemma: var-base-only} the case where we only consider the prediction error in the base load.

 \begin{proof}[Proof of Lemma \ref{lemma: var-base-only}]
Let $e = (e(1), \ldots, e(T))$, when there is no prediction error in the deferrable load arrival, we have

\begin{align*}
V = &\frac{1}{T}\sum_{t=1}^T(\sum_{\tau=1}^t \frac{\tau-1}{T(T-\tau+1)}F(T-\tau) e(\tau) \\
& - \sum_{\tau=t+1}^T \frac{1}{T}F(T-\tau) e(\tau))^2.
\end{align*}

If we define an auxilary matrix $C$ such that
\[ C_{t\tau} = \begin{cases}
\frac{\tau-1}{T(T-\tau+1)}F(T-\tau), & \tau \le t \\
-\frac{1}{T}F(T-\tau), & \tau>t
\end{cases}\]
Then we have
\[V = g(e(1), e(2), \ldots, e(T)) = \frac{1}{T}||Ce||_2^2. \]
Hence $V = g(e)$ is a convex function in $e$. By similar argument as Lemma \ref{lemma: var-deferrable-only}
\begin{equation}
\mbox{var}(V) \le 4\epsilon_2^2\ex[||\nabla g(e)||^2].
\label{eqn: poincare}
\end{equation}
Whereas
\begin{align*}
\ex \left[||\nabla g(e) ||^2 \right] &= \frac{4}{T^2} \ex \left[ ||C^T C e||^2 \right] \\
&\le \frac{4}{T^2} \lambda_{\max} (C^T C) \ex\left[||Ce||^2 \right] \\
&\le 4\mbox{tr}\left(\frac{1}{T} C^T C \right) \ex \left[\frac{1}{T}||Ce||^2\right] \\
&= 4 \sigma^2 \left [ \mbox{tr}\left(\frac{1}{T} C^T C \right) \right]^2 \\
&=4 \sigma^2 \left(\frac{1}{T^2} \sum_{t=0}^{T-1} F^2(t) \frac{T-t+1}{t+1}\right)^2.
\end{align*}
\end{proof}

Next, we bring the two results together to get a proof of Theorem \ref{thm: variance}.

\begin{proof}[Proof of Theorem 2]
Let $V_1$ be the load variance without prediction error in base load and $V_2$ be the load variance without prediction error in the deferrable load.
\[V = V_1 + V_2 .\]

By independence of $x$ and $e$, the variance of $V$ is bounded by
\begin{align*}
& \mathrm{var}(V) = \mbox{var}(V_1) +\mbox{var}(V_2) \\
&\le  \left(\frac{4 \epsilon_1 s\ln T  }{T}\right)^2 + \left(\frac{ 4 \epsilon_2 \sigma}{T^2} \sum_{t=0}^{T-1} F^2(t) \frac{T-t+1}{t+1}\right)^2. 
\end{align*}

\end{proof}

\end{document}